\newtheorem{theorem}{Theorem}[section]
\newtheorem{thm}[theorem]{Theorem}
\newtheorem{lem}[theorem]{Lemma}
\theoremstyle{definition}
\newtheorem{defn}[theorem]{Definition}
\newtheorem{ques}[theorem]{Question}
\newtheorem{rem}[theorem]{Remark}
\newtheorem{conj}[theorem]{Conjecture}
\theoremstyle{remark}
\newcommand{\mbb}{\mathbb}
\newcommand{\QQ}{\mbb{Q}}
\newcommand{\ZZ}{\mbb{Z}}
\newcommand{\RR}{\mbb{R}}
\newcommand{\AAA}{\mbb{A}}
\newcommand{\PP}{\mbb{P}}
\newcommand{\mc}{\mathcal}
\newcommand{\mcL}{\mc{L}}
\newcommand{\mcT}{\mc{T}}
\newcommand{\mcX}{\mc{X}}
\newcommand{\OO}{\mc{O}}
\newcommand{\SP}{\text{Spec }}
\newsavebox{\sembox}
\newlength{\semwidth}
\newlength{\boxwidth}
\newsavebox{\semrbox}
\newlength{\semrwidth}
\newlength{\boxrwidth}
\newcommand{\tx}{\tilde{X}}
\title
{Towards the symplectic Graber-Harris-Starr theorems}
\author[Tian]{Zhiyu Tian}
\address{
Department of Mathematics \\
California Institute of Technology \\ 
Pasadena, CA, 91125}
\email{tian@caltech.edu}
\date{\today}
\begin{document}

%%%%%%%%%%%%%%%%%%%%%%%%%%%%%%%%%%%%%%%%%%%%%%%%%%%%%%%%%%%%%%%%%%%%
%%
%% Abstract
%%
%%%%%%%%%%%%%%%%%%%%%%%%%%%%%%%%%%%%%%%%%%%%%%%%%%%%%%%%%%%%%%%%%%%%

\begin{abstract}
 A theorem of Graber, Harris, and Starr states that a rationally connected fibration over a curve has a section. We study an analogous question in symplectic geometry. Namely, given a rationally connected fibration over a curve, can one find a section which gives a non-zero Gromov-Witten invariant? We observe that for any fibration, the existence of a section which gives a non-zero Gromov-Witten invariant only depends on the generic fiber, i.e. a variety defined over the function field of a curve. Some examples of rationally connected fibrations with this property are given, including all rational surface fibrations. We also prove some results, which says that in certain cases we can ``lift" Gromov-Witten invariants of the base to the total space of a rationally connected fibration. 
\end{abstract}

%%%%%%%%%%%%%%%%%%%%%%%%%%%%%%%%%%%%%%%%%%%%%%%%%%%%%%%%%%%%%%%%%%%%%%
%%
%% Body
%%
%%%%%%%%%%%%%%%%%%%%%%%%%%%%%%%%%%%%%%%%%%%%%%%%%%%%%%%%%%%%%%%%%%%%%%

\maketitle

%% \tableofcontents

%%%%%%%%%%%%%%%%%%%%%%%%%%%%%%%%%%%%%%%%%%%%%%%%%%%%%%%%%%%%%%%
%%
%% Section: Introduction
%% 
%%%%%%%%%%%%%%%%%%%%%%%%%%%%%%%%%%%%%%%%%%%%%%%%%%%%%%%%%%%%%%%

\section{Introduction}
This paper is devoted to the study of some symplectic geometric/topological aspects of rationally connected varieties. First recall that a smooth complex projective variety is called \emph{rationally connected} if for any pair of points there is a rational curve containing them. In the mid-nineties, after the introduction of Gromov-Witten invariants, Koll{\'a}r and Ruan discoved that many aspects of algebraic geometry should have their symplectic analogues. In particular, the following conjectures are proposed. 
\begin{conj}\label{conj:symrc}
Let $X$ be a smooth projective rationally connected variety. There is a non-zero Gromov-Witten invariant of the form $\langle [pt], [pt], A_1, \ldots, A_n, I \rangle^X_{0, \beta}$. Here $A_i \in H^*(X, \QQ)$ and $I$ is the pull-back of some cohomology class from $\overline{M}_{0, n+2}$ via the forgetful map $\overline{M}_{0, n+2}(X, \beta) \to \overline{M}_{0, n+2}$ (if $n \geq 1$).
\end{conj}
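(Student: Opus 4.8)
Since Conjecture~\ref{conj:symrc} is a long-standing open problem --- the ``symplectic rational connectedness'' conjecture that the title alludes to --- I will describe the strategy I would pursue, indicate the range in which it can be pushed through, and isolate the essential gap.

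The plan is to induct on $\dim X$. The base case $\dim X = 1$ is immediate, since then $X \cong \PP^1$ and $\langle [pt], [pt]\rangle^{\PP^1}_{0, [\PP^1]} = 1 \neq 0$. For the inductive step I would try to realize $X$, after a birational modification if necessary, as the total space of a rationally connected fibration $f\colon \widetilde{X} \to B$ over a rationally connected base $B$ of smaller dimension --- for instance a rational curve fibration over an $(n-1)$-dimensional rationally connected $B$, or, passing directly to curves, a fibration over $\PP^1$ whose generic fiber is a rationally connected variety over $\CC(t)$. By the induction hypothesis $B$ carries a non-zero Gromov--Witten invariant of the shape in the conjecture, and the problem becomes to \emph{lift} this invariant from $B$ to $\widetilde{X}$ and then transport it back along the birational map to $X$. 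Lifting invariants up a rationally connected fibration is exactly the type of statement established later in the paper, and the fact that whether such a lift exists depends only on the generic fiber over $\CC(t)$ is the observation recorded in the abstract: changing $\widetilde X$ to another total space with the same generic fiber alters the geometry only over finitely many points of the base, and the (non)vanishing of the relevant invariant is insensitive to that.

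Concretely, I would organize the lifting step as follows. Fix the class $\beta_B$ on $B$ giving the non-zero invariant $\langle [pt], [pt], A_1, \ldots, I\rangle^B_{0, \beta_B}$ and choose a class $\beta$ on $\widetilde X$ with $f_*\beta = \beta_B$, obtained by adding to a section (or multisection) class enough vertical fiber class for the fibered moduli space to behave well. One then studies the morphism $\overline{M}_{0, m}(\widetilde X, \beta) \to \overline{M}_{0, m}(B, \beta_B)$ induced by $f$: away from the boundary this is essentially a relative moduli space of genus-zero maps into the fibers of $f$, so one can (i) pull back the point and $I$ constraints from $B$, (ii) add one further fiberwise point constraint so the fibered count becomes the two-point invariant of a general fiber, and (iii) compare virtual classes to express $\langle \ldots \rangle^{\widetilde X}_{0, \beta}$ as a non-zero multiple of $\langle \ldots \rangle^B_{0, \beta_B}$ up to contributions of boundary strata, which one then argues vanish for dimension reasons or do not cancel.

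The hard part will be twofold, and it is exactly why the conjecture remains open. First, producing a rationally connected fibration of $X$ over a strictly smaller rationally connected base is not known in general; it is available in the cases the paper treats --- rational surface fibrations and the explicit examples --- which is why those are where the unconditional results lie. Second, and more seriously, even granted such a fibration, proving the lifted invariant is \emph{non-zero} requires controlling the contribution of the boundary of $\overline{M}_{0, m}(\widetilde X, \beta)$ to a virtual count: unlike an honest enumerative count, a Gromov--Witten invariant can vanish through cancellation among stable maps with reducible domains or with components collapsed into fibers. Taming those contributions --- ideally by choosing $\beta$ so that the moduli space is unobstructed of expected dimension along the relevant locus while the boundary has strictly smaller dimension --- is the technical heart of the matter and where I would expect to spend the bulk of the effort, and it is where, for an arbitrary rationally connected $X$, the argument currently stops short.
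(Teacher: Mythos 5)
This statement is Conjecture~\ref{conj:symrc}: the paper does not prove it, and neither do you. Your proposal is a strategy outline with explicitly acknowledged gaps, so as a proof it fails at the outset; what you have written is, in substance, the same programme the paper itself sketches in the introduction (reduce via the MMP to rationally connected fibrations, lift a non-vanishing invariant from the base, and transport across birational modifications), so at least you have correctly identified both the intended route and the reasons it does not yet close.

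Beyond the two obstacles you name, there is a third gap you underplay: your plan ends with ``transport it back along the birational map to $X$,'' but the birational invariance of symplectic rational connectedness is itself open --- the paper says so explicitly in the remark after Theorem~\ref{thm:totalspace}, and only establishes a special form of it in restricted geometric situations. Your first acknowledged gap is also sharper than you state: the MMP can terminate at a $\QQ$-factorial terminal Fano of Picard number one, which admits no fibration over a positive-dimensional base at all, so the inductive step is not merely ``not known in general'' but structurally unavailable for an entire class of varieties. Finally, your lifting step (iii), comparing virtual classes along $\overline{M}_{0,m}(\widetilde X,\beta)\to\overline{M}_{0,m}(B,\beta_B)$ and arguing that boundary contributions vanish or do not cancel, is precisely what the paper can only carry out under the strong hypotheses of Theorems~\ref{thm:totalspace} and~\ref{thm:uniruled}: an \emph{enumerative} invariant on the base represented by embedded curves, and a generic fiber factoring into rationally connected pieces of relative dimension at most $2$, so that everything reduces to the del Pezzo / conic fibration case of Theorem~\ref{thm:fano}. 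Without hypotheses of that kind there is no known mechanism for ruling out cancellation in the virtual count, and that is where the conjecture genuinely remains open.
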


\begin{rem}
A symplectic manifold with the above property is called \emph{symplectic rationally connected}. One can also define symplectic rational connectedness as having some non-zero descendant Gromov-Witten invariant with two point insertions. But this imply that the variety is symplectic rationally connected in the sense of the above definition by recursively replacing $\psi$ classes with classes pulled back from $\overline{M}_{0, n+2}$.
\end{rem}

\begin{conj}[Koll{\'a}r, \cite{KollarUni}]\label{conj:Kollar}
Let $X$ and $X'$ be two smooth projective varieties which are symplectic deformation equivalent. Then $X$ is rationally connected if and only if $X'$ is.
\end{conj}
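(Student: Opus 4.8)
The plan is to deduce the conjecture from Conjecture \ref{conj:symrc} together with the fact that genus-zero Gromov--Witten invariants, including the ones appearing in Conjecture \ref{conj:symrc}, are invariants of the symplectic deformation type. So suppose $X$ and $X'$ are smooth projective and symplectic deformation equivalent, and (by symmetry) assume $X$ is rationally connected; I will show $X'$ is. A symplectic deformation is a smooth family $(M_t,\omega_t)_{t\in[0,1]}$ of closed symplectic manifolds with $(M_0,\omega_0)\cong X$ and $(M_1,\omega_1)\cong X'$. Parallel transport in the local system $H^*(M_t;\QQ)$ over this family gives a graded ring isomorphism $H^*(X;\QQ)\cong H^*(X';\QQ)$ carrying the point class to the point class and inducing an isomorphism $H_2(X;\ZZ)\cong H_2(X';\ZZ)$; the spaces of stable maps and their virtual classes, the forgetful maps $\overline{M}_{0,m+2}(M_t,\beta)\to\overline{M}_{0,m+2}$, and hence all classes $I$ pulled back from $\overline{M}_{0,m+2}$ vary in families as well. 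Consequently the full package of genus-zero Gromov--Witten numbers of $X$ (with insertions that may include such classes $I$) is matched, term by term, with that of $X'$.

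With this in hand the first step is to feed in Conjecture \ref{conj:symrc}: since $X$ is rationally connected there are $\beta\in H_2(X;\ZZ)$, classes $A_1,\dots,A_n\in H^*(X;\QQ)$, and $I$ a pullback from $\overline{M}_{0,n+2}$ with $\langle [pt],[pt],A_1,\dots,A_n,I\rangle^X_{0,\beta}\neq 0$. Transporting across the family, $\langle [pt],[pt],A_1',\dots,A_n',I'\rangle^{X'}_{0,\beta'}\neq 0$, where $\beta'$, $A_i'$, $I'$ are the images of $\beta$, $A_i$, $I$ under parallel transport; in particular the two point insertions are again point classes of $X'$.

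The second step is the standard geometric consequence of a non-vanishing two-point invariant on the projective variety $X'$. Non-vanishing of $\langle [pt],[pt],A_1',\dots,A_n',I'\rangle^{X'}_{0,\beta'}$ implies that $\overline{M}_{0,n+2}(X',\beta')$ is non-empty and that the restriction of $\mathrm{ev}_1\times\mathrm{ev}_2$ to the locus cut out by the remaining incidence conditions dominates $X'\times X'$; thus through a general pair of points $p,q\in X'$ there is a genus-zero stable map, whose image is a connected curve of arithmetic genus zero joining $p$ and $q$. Hence $X'$ is rationally chain connected, and since $X'$ is smooth and projective, the theorem of Koll\'ar--Miyaoka--Mori (equivalently Campana) upgrades this to rational connectedness. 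Exchanging $X$ and $X'$ gives the converse, and the ``if and only if'' follows.

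The main obstacle is entirely the reliance on Conjecture \ref{conj:symrc}: once rational connectedness is known to force the advertised non-zero Gromov--Witten invariant, the rest is soft. So the genuine content is to establish Conjecture \ref{conj:symrc} for as broad a class of rationally connected varieties as possible, and the present paper does this for certain rationally connected fibrations over curves --- in particular for all rational surface fibrations --- by ``lifting'' a non-zero invariant of the base to the total space; this then proves Conjecture \ref{conj:Kollar} unconditionally for symplectic manifolds deformation equivalent to such varieties. A secondary technical point I would want to nail down precisely is that parallel transport in a symplectic family really does identify the entire structure used above --- the correspondence of curve classes $\beta\leftrightarrow\beta'$, the ancestor classes pulled back from the Deligne--Mumford spaces, and the matching of virtual classes --- since the class $I$ in Conjecture \ref{conj:symrc} is part of the data that must be transported; this is standard but deserves an explicit statement.
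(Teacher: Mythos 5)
The statement you are asked about is labelled as a \emph{conjecture} in the paper, and the paper offers no proof of it; it only records that the $3$-fold case is settled in \cite{SymRC} and then develops fibration analogues. Your proposal is therefore not a proof of the statement but a conditional reduction: everything hinges on Conjecture \ref{conj:symrc}, which is itself open in general (and is precisely the hard content --- it is the ``symplectic characterization'' of rational connectedness that the whole program is trying to establish). Invoking an open conjecture of at least the same depth as the target statement is the genuine gap here, and no amount of care with parallel transport closes it. You do flag this yourself, which is to your credit, but it means the argument proves Conjecture \ref{conj:Kollar} only for those $X$ for which Conjecture \ref{conj:symrc} is already known (surfaces, the $3$-folds of \cite{SymRC}, and --- after this paper --- certain fibrations), not in general.

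That said, the reduction itself is sound and is exactly the folklore mechanism the paper is organized around. The two unconditional ingredients are correct: genus-zero Gromov--Witten invariants (including those with classes pulled back from $\overline{M}_{0,n+2}$) depend only on the symplectic deformation class, with curve classes, point classes, and the tautological insertions all transported coherently along the family; and a non-vanishing invariant $\langle [pt],[pt],A_1',\dots,A_n',I'\rangle^{X'}_{0,\beta'}$ forces $\mathrm{ev}_1\times\mathrm{ev}_2$ to dominate $X'\times X'$ (a top-degree homology class supported on a proper closed subvariety vanishes), so a general pair of points lies on a connected genus-zero curve, giving rational chain connectedness and hence rational connectedness for smooth projective $X'$. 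If you want to present something, present it as: ``Conjecture \ref{conj:symrc} implies Conjecture \ref{conj:Kollar},'' which is a true and useful proposition --- but it is not a proof of Conjecture \ref{conj:Kollar}, and the paper does not claim one.
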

A stronger conjecture is also proposed by Koll{\'a}r in a private communication.

\begin{conj}[Koll{\'a}r]\label{conj:Kstrong}
The maximal rationally connected (MRC) quotient of a smooth projective variety is a symplectic deformation invariant.
\end{conj}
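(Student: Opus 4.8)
The plan is to reduce Conjecture~\ref{conj:Kstrong} to Conjecture~\ref{conj:symrc} together with its expected converse --- that a symplectic rationally connected smooth projective variety is rationally connected --- and to the ``lifting'' form of the symplectic Graber--Harris--Starr statement that is the subject of this paper. The underlying principle is that Gromov--Witten invariants of the type appearing in Conjecture~\ref{conj:symrc} are symplectic deformation invariants, so any feature of the MRC fibration $\pi\colon X\dashrightarrow Z$ that can be read off from such invariants is automatically preserved. The problem thus becomes: encode $\pi$, or as much of it as possible, in the Gromov--Witten data of $X$.

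I would first aim only at the rational dimension $r(X):=\dim X-\dim Z$, the dimension of a general MRC fibre $F$. On one hand $F$ is rationally connected, hence, granting Conjecture~\ref{conj:symrc}, symplectic rationally connected; feeding its two point insertions into the lifting theorem, together with classes pulled back from $Z$ that cut $Z$ down to a point, produces a non-zero invariant $\langle[pt],[pt],A_1,\ldots,A_n,I\rangle^X_{0,\beta}$ with $\pi_*\beta=0$ and with the $A_i$, up to a correction by the class of a fibre, pulled back from $Z$. On the other hand one must prove the converse: a non-zero invariant of this shape with $\pi_*\beta=0$ forces the two marked points into a single fibre, so that the (rationally connected) fibre through them has dimension at least the codimension of the $A_i$. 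Matching the two bounds characterizes $r(X)$ purely in terms of Gromov--Witten invariants; since under a symplectic deformation to $X'$ the class $\beta$, its pairing with $K_X$, and the classes $A_i$ all transport, this yields $r(X)=r(X')$, and in particular the invariance of rational connectedness (Conjecture~\ref{conj:Kollar}) and of uniruledness.

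To upgrade invariance of $\dim Z$ to invariance of the birational class of $Z$ itself, I would argue that the invariants of $X$ with $\pi_*\beta\neq 0$ recover the Gromov--Witten invariants of $Z$: a non-zero such invariant should, via a push-forward along $\pi$ made rigorous through a deformation-to-the-normal-cone degeneration of the kind used for the lifting theorems, yield a non-zero invariant of $Z$ with the matching numerical data, non-uniruledness of $Z$ being what rules out spurious fibre contributions. One then has non-uniruled targets $Z$, $Z'$ with matching quantum cohomology and Gromov--Witten invariants, from which one would deduce the desired statement about $Z$.

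The main obstacle is the converse step of the second paragraph: showing that a non-zero two-pointed invariant with $\pi_*\beta=0$ detects rational curves \emph{contracted} by the MRC fibration rather than arbitrary rational curves on $X$. This is the converse to Conjecture~\ref{conj:symrc} sharpened to remember the ambient fibration, and it demands genuine control over the moduli space of stable maps contributing to the invariant --- control that the degeneration techniques here supply only in special cases such as rational surface fibrations. A secondary obstacle is birational-geometric: recovering $Z$ itself, rather than merely $\dim Z$ or the Hodge numbers of $Z$, from the quantum cohomology of a non-uniruled variety is far beyond current technology, so a realistic intermediate target is the invariance of $\dim Z$ alone.
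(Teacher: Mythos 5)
The statement you are addressing is Conjecture~\ref{conj:Kstrong}, which the paper records as an open conjecture of Koll{\'a}r and does not prove; there is no proof in the paper to compare against. What the paper offers instead is (i) the informal heuristic in the introduction --- that one should construct a ``symplectic MRC quotient'' visible in Gromov--Witten invariants, show it is non-uniruled, and identify it with the usual MRC quotient --- and (ii) partial results in that direction, namely Theorems~\ref{thm:section}, \ref{thm:totalspace}, and \ref{thm:uniruled}, all of which require the generic fiber to be built from rationally connected pieces of dimension at most~$2$ and, in Theorem~\ref{thm:uniruled}, an enumerativity hypothesis on the base. Your outline reproduces that heuristic faithfully, but it is a research program rather than a proof, and you say as much yourself.

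The concrete gaps are these. First, every load-bearing step of your argument is itself open: Conjecture~\ref{conj:symrc}, its converse, and the unconditional lifting statements of Questions~\ref{ques:totalspace} and~\ref{ques:uniruled} are all unproven except in the special fibration cases the paper treats. Second, the ``converse step'' you flag --- that a non-zero two-pointed invariant with $\pi_*\beta=0$ detects curves contracted by the MRC fibration --- is genuinely the hard point and is not supplied by anything in the paper; the virtual count can receive contributions from reducible stable maps and from components with obstructed deformation theory, so non-vanishing of the invariant does not directly produce an actual rationally connected fiber of the claimed dimension, and the ``matching bounds'' characterization of $r(X)$ is therefore not established. Third, as the paper itself notes, it is not even clear how to formulate the invariance of the rational map $X\dashrightarrow Z$; the realistic target is invariance of numerical data such as $\dim Z$ and the class of a general fiber, which is where your proposal in effect retreats to in its final paragraph. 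So the proposal should be read as a correct identification of the intended strategy and of its obstructions, not as a proof of the conjecture.
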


Here is an explanation of some of the terms above. For any smooth projective variety, the polarization gives a K{\"a}hler, thus symplectic, form. A variety $X$ is symplectic deformation equivalent to a variety $X'$, if there is a family of symplectic manifolds $(X_t, \omega_t)$, diffeomorphic to each other, such that $(X_0, \omega_0)$ (resp. $(X_1, \omega_1)$) is isomorphic to $(X, \omega)$ (resp. $(X', \omega')$) as a symplectic manifold. Note that different choices of the polarization give symplectic deformation equivalent varieties.

The MRC quotient of a variety is a rational dominant map $X \dashrightarrow Z$, whose restriction to an open dense subset is a proper morphism onto its image with rationally connected general fiber,  such that for any other rational dominant map $X \dashrightarrow Z'$ with above property, there is a rational map $Z' \dashrightarrow Z$ making the obvious diagram commutative.

One issue about Conjecture \ref{conj:Kstrong} is that it is difficult to make sense of a rational map being symplectic deformation invariant. However, we can at least ask that some numerical invariants, e.g. the dimension and the cohomology class of a general fiber, are symplectic deformation invariant.

Koll{\'a}r's conjecture \ref{conj:Kollar} is proved for $3$-folds in \cite{SymRC}. The stronger one also follows from the proof of that paper. And for all rationally connected surfaces and some classes of rationally connected $3$-folds, the existence of the non-vanishing Gromov-Witten invariant is known \cite{SymRC}.

In this paper we propose to study the following closely related questions.
\begin{ques}\label{ques:section}
Let $X \to C$ be a projective morphism from a smooth projective variety onto a smooth curve such that a general fiber is rationally connected (or symplectic rationally connected). Is there a non-zero Gromov-Witten invariant given by a section?
\end{ques}

\begin{ques}\label{ques:totalspace}
Let $X \to Y$ be a surjective morphism between smooth projective varieties. Assume both $Y$ and a general fiber are symplectic rationally connected. Is $X$ symplectic rationally connected?
\end{ques}

\begin{ques}\label{ques:uniruled}
Let $\pi: X \to Y$ be a morphism between smooth projective varieties. Assume a general fiber $F$ is symplectic rationally connected and $Y$ is uniruled. Is there a curve class $\beta$ in $H_2(X, \ZZ)$ with a non-zero Gromov-Witten invariant of the form $\langle [pt], \ldots \rangle^{X}_{0, \beta}$ such that $\pi_*(\beta)$ is non-zero?
\end{ques}

We now explain how these questions are related to the conjectures \ref{conj:symrc}, \ref{conj:Kollar}, and \ref{conj:Kstrong}. 

The following way to prove that every rationally connected variety has a non-vanishing Gromov-Witten invariant with two point insertions is proposed in \cite{SymRC}. Namely, first show that the existence of such an invariant is a birational invariant and then find in each birational class a ``good" representative which has such a Gromov-Witten invariant. By the minimal model program (MMP), most (in some sense) rationally connected varieties are birational to fibrations of rationally connected varieties over another rationally connected variety. Thus Question \ref{ques:totalspace} can be viewed as an inductive step to Conjecture \ref{conj:symrc}.

The idea behind Question \ref{ques:uniruled} and Conjecture \ref{conj:Kstrong} is that we should be able to construct some kind of ``symplectic MRC quotient", which is a symplectic deformation invariant. And we should be able to ``lift" the uniruled rational curve on the base of some rationally connected fibration and the process should be visible in terms of Gromov-Witten invariants. Therefore the symplectic MRC quotient should be non-uniruled and coincide with the usual MRC quotient.

The expectation is that the answers are always affirmative to the above questions. We would like to call these type of results the symplectic Graber-Harris-Starr theorems since they are the symplectic analogues of the celebrated theorem of Graber, Harris, and Starr, which says that a rationally connected fibration over a curve always has a section and, as an immediate corollary, the base of the MRC fibration is non-uniruled and the total space of a rationally connected fibration over a rationally connected variety is itself rationally connected. 

Note that the statements about MRC fibration and rational connectedness of the total space as above are really corollaries of the Graber-Harris-Starr theorem. However, it is not clear that Question \ref{ques:section}, if known to be true, will give positive answers to the other two questions. It is interesting to know if this is the case.
\begin{ques}
Do Questions \ref{ques:uniruled} and \ref{ques:totalspace} follow from Question \ref{ques:section}?
\end{ques}

For a proper fibration over a curve, the existence of a section is the same as the existence of a rational point in the generic fiber by the valuative criterion, which is certainly a property that only depends on the generic fiber. In this paper we first observe that the same is true in the symplectic setting.

\begin{thm}\label{thm:generic}
Let $\pi: X \to C$ (resp. $Y \to C$) be a fibration over a smooth projective curve with a smooth projective total space, and let $X_\eta$ (resp. $Y_\eta$) be the generic fiber. Assume that $X_\eta$ and $Y_\eta$ are isomorphic over the function field of $C$. Then there is a section of $X \to C$ which gives a non-zero Gromov-Witten invariant if and only if there is a section of $Y \to C$ giving a non-zero Gromov-Witten invariant of $Y$.
\end{thm}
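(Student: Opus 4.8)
The plan is to reduce the assertion to a statement about a single blow-up along loci lying over finitely many points of $C$, and then invoke a blow-up comparison for Gromov--Witten invariants. An isomorphism $X_\eta \xrightarrow{\sim} Y_\eta$ over $K := \CC(C)$ spreads out to an isomorphism $X_U \xrightarrow{\sim} Y_U$ over some dense open $U \subseteq C$; set $S := C \setminus U$, a finite set. Let $Z$ be a resolution of singularities of the closure of the graph of the resulting birational map inside $X \times_C Y$. Then $Z$ is smooth projective, carries a morphism to $C$ compatible with those of $X$ and $Y$, and the two projections $p\colon Z \to X$, $q\colon Z \to Y$ are birational morphisms that are isomorphisms over $U$; in particular their exceptional loci lie in $\bigcup_{c \in S} Z_c$. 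Since sections of a proper model correspond bijectively to $K$-points of the generic fiber by the valuative criterion, sections of $X \to C$, of $Y \to C$ and of $Z \to C$ are all in canonical bijection, corresponding sections agreeing as cycles over $U$. Hence it suffices to prove: for a birational morphism $f\colon Z \to X$ of smooth projective fibrations over $C$ which is an isomorphism over $U = C \setminus S$, the total space $X$ has a section giving a non-zero Gromov--Witten invariant if and only if $Z$ does.

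Next I would reduce to a single elementary modification. By the weak factorization theorem, $f$ factors (after further blow-ups) as a zig-zag of smooth blow-ups and blow-downs, and one may arrange every center to lie over $S$: the centers are contained in the relevant exceptional loci, which already lie over $S$, and over $U$ the added blow-ups can be taken trivial. So it is enough to show that the property in question is unchanged under a single blow-up $\sigma\colon \widehat{X} = \mathrm{Bl}_W X \to X$ along a smooth center $W \subseteq \bigcup_{c \in S} X_c$ of codimension $\ge 2$, with exceptional divisor $E \subseteq \widehat{X}$ (a blow-down being the same statement read backwards).

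For that step, suppose a section $s$ of $X \to C$ witnesses $\langle \gamma_1, \dots, \gamma_n; \delta \rangle^X_{g,\beta} \ne 0$ with $g = g(C)$, $\beta = s_*[C]$, and $\delta$ pulled back from $\overline{M}_{g,n}$. I would take the $\gamma_i$ to be classes of general points lying over general points of $C$ (hence automatically off the bad fibers), and use rational connectedness of the general fiber to ensure that sections can be chosen through general points of $X_c$ for the finitely many $c \in S$ as well, so that $s(C)$ is disjoint from $W$. Such a section lifts uniquely to a section $\widehat{s}$ of $\widehat{X}$ with $\sigma_*[\widehat{s}] = \beta$ and $[\widehat{s}] \cdot E = 0$, and $\sigma$ is an isomorphism near $\widehat{s}(C)$, so the deformation--obstruction data of $\widehat{s}$ and $s$ agree. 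A blow-up comparison for Gromov--Witten invariants in classes pairing trivially with the exceptional divisor and with insertions pulled back along $\sigma$ (as in Gathmann for point blow-ups and Hu and Lai for smooth centers) then yields $\langle \sigma^*\gamma_1, \dots, \sigma^*\gamma_n; \delta \rangle^{\widehat{X}}_{g,[\widehat{s}]} = \langle \gamma_1, \dots, \gamma_n; \delta \rangle^X_{g,\beta} \ne 0$, giving a section of $\widehat X$ with a non-zero invariant. For the converse, a witnessing invariant of $\widehat{X}$ carried by a section $\widehat{s}$ has $[\widehat{s}] \cdot E = 0$, so $\widehat{s}(C)$ and indeed every stable map in its class is disjoint from $E$ after the relevant cancellations; the $E$-supported parts of the insertions then integrate to zero, and pushing forward along $\sigma$ produces a non-zero invariant of $X$ carried by $f \circ \widehat{s}$.

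The main obstacle I expect is exactly this last Gromov--Witten comparison across the blow-up. One must (i) check that the hypotheses of the available blow-up formulas hold --- in particular that the section class pairs trivially with each exceptional divisor and that the chosen insertions are genuinely pulled back, which constrains which witnessing invariants one is allowed to transport --- and (ii), for the push-forward direction, control \emph{all} stable maps in class $\beta$ on $X$ through the given constraints, ruling out contributions not coming from $\widehat{X}$; concretely this amounts to showing that $\sigma$ induces an isomorphism of the relevant moduli spaces together with their obstruction theories (including the boundary strata consisting of proper transforms of sections through $W$ with an attached exceptional line). A possible alternative route, which I would expect to meet the same difficulty in local form, is a degeneration-formula argument: degenerate $C$ so as to split off a disc around each point of $S$, reducing to the case $|S| = 1$, $C = \PP^1$, and then compare the ``local'' Gromov--Witten theories of $X$ and $Y$ over that disc. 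I would nonetheless expect the blow-up route, combined with the flexibility of sections through general points of a rationally connected fiber, to be the cleanest path.
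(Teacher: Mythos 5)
Your first reduction is essentially the paper's: spread the isomorphism of generic fibers out to an isomorphism over a dense open $U\subseteq C$, and use weak factorization to reduce to a single smooth blow-up whose center lies over $C\setminus U$, i.e.\ in special fibers. The gap is in how you handle that single blow-up. The hypothesis only provides \emph{some} non-zero Gromov--Witten invariant in \emph{some} section class $\beta$ with \emph{some} insertions; you are not free to reposition the underlying section so that it misses the center $W$. Your mechanism for doing so invokes rational connectedness of the fibers, which is not a hypothesis of Theorem \ref{thm:generic} (the theorem is stated for arbitrary fibrations), and in any case moving the section changes the curve class and hence the invariant. In the converse direction, your assertion that a section class of $\widehat{X}$ automatically satisfies $[\widehat{s}]\cdot E=0$ is false: the proper transform of a section of $X$ passing through $W$ is a section of $\widehat{X}$ whose class pairs positively with $E$, and nothing prevents the witnessing invariant from living in such a class. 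So the restricted blow-up comparison theorems you cite (which require the class to pair trivially with $E$ and the insertions to be pulled back) need not apply. Finally, even for a class with $\beta\cdot E=0$, the invariant is a virtual count over the whole moduli space, which contains degenerate stable maps meeting $W$; you name this as obstacle (ii) but do not resolve it, and it is precisely the hard point.

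The paper avoids all of this by not attempting any geometric control of individual curves. It degenerates $X$ to the normal cone of $S$ to pass from absolute invariants of $X$ to relative invariants of $(\widehat{X},E)$, and then uses the Hu--Li--Ruan correspondence (Theorem \ref{thm:Correspondence}): the degeneration formula defines a \emph{lower triangular, hence invertible}, linear map between the system of relative invariants of $(\widehat{X},E)$ and the system of colored absolute descendant invariants of $X$ relative to $S$. Invertibility guarantees that a non-zero invariant on one side forces a non-zero invariant on the other, for arbitrary classes and insertions; the section property is preserved simply because the center lies in a fiber, so every contributing configuration has a component on the $(\widehat{X},E)$ side with intersection number $1$ against a fiber. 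To repair your argument you would need to replace your ``avoid the center'' step by this (or an equivalent) invertible comparison; as written, both directions of your blow-up step fail on invariants whose section class meets $W$ (respectively $E$).
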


Given this theorem, it is natural to ask the following.

\begin{ques}
Let $\pi: X \to C$ (resp. $Y \to C$) be a fibration over a smooth projective curve and $X_\eta$ (resp. $Y_\eta$) the generic fiber. Assume that $X_\eta$ and $Y_\eta$ are birational to each other over the function field of $C$. Then is it true that there is a section of $X \to C$ which gives a non-zero Gromov-Witten invariant if and only if there is a section of $Y \to C$ giving a non-zero Gromov-Witten invariant of $Y$?
\end{ques}

Given Theorem \ref{thm:generic}, one can ask for which rationally connected variety over the function field of a curve the symplectic Graber-Harris-Starr theorem holds. The following result gives a partial answer to this.

\begin{thm}\label{thm:section}
Let $\pi: \mcX \to C$ be a rationally connected fibration over a smooth projective curve $C\cong \PP^1$ with smooth projective total space. And let $\mcX_\eta$ be the generic fiber. Then there is a section of $\pi: \mcX \to C$, whose curve class gives a non-zero Gromov-Witten invariant of $\mcX$ if $\mcX_\eta$ has a fibration structure $$\mcX_\eta=\mcX_n \to \mcX_{n-1} \to \ldots \to \mcX_1 \to \SP K(C),$$ where the generic fiber of each morphism is a rationally connected variety of dimension at most $2$.
\end{thm}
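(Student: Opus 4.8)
The plan is to induct on the length $n$ of the fibration tower, using Theorem \ref{thm:generic} to reduce to a convenient birational model at each stage, and at the bottom using known constructions of sections (with non-vanishing Gromov-Witten invariant) for rational surface and rational curve fibrations over a curve. First I would treat the base case: when $\mcX_\eta$ itself is a rationally connected variety of dimension $\leq 2$ over $K(C)$, so that $\mcX \to C$ is (birational to) a conic bundle or a rational surface fibration over $C \cong \PP^1$. For such fibrations the existence of a section giving a non-zero Gromov-Witten invariant is, as the introduction indicates, part of the known package for rationally connected surfaces (compare the discussion of \cite{SymRC}); by Theorem \ref{thm:generic} it suffices to verify this for one birational representative of $\mcX_\eta$ over $K(C)$, e.g.\ a relatively minimal one, where explicit curve classes (the image of a line in a fiber, suitably twisted) and a direct count on $\overline{M}_{0,m}(\mcX,\beta)$ can be carried out.

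For the inductive step, suppose the result holds for towers of length $n-1$, and consider $\mcX_\eta = \mcX_n \to \mcX_{n-1} \to \cdots \to \SP K(C)$. Spreading out, I get a morphism $\mcX \to \mcZ$ over $C$, where $\mcZ \to C$ is a smooth projective model of $\mcX_{n-1} \to C$ with a tower of length $n-1$, and the generic fiber of $\mcX \to \mcZ$ is a rationally connected variety $F$ of dimension $\leq 2$ over $K(\mcZ)$. By the induction hypothesis, $\mcZ \to C$ has a section $s\colon C \to \mcZ$ whose class $\sigma = s_*[C]$ gives a non-zero Gromov-Witten invariant of $\mcZ$, say $\langle [pt],[pt],A_1,\dots,A_k,I\rangle^{\mcZ}_{0,\sigma}\neq 0$. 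Now $s(C) \cong \PP^1$ sits inside $\mcZ$, and pulling back the fibration $\mcX \to \mcZ$ to this curve gives a rationally connected fibration $\mcX' := \mcX\times_{\mcZ} s(C) \to \PP^1$ whose generic fiber is $F$ over the function field $K(s(C)) = K(\PP^1)$; this again has a tower structure of length $\le n-1$ (in fact just the single surface or curve fiber $F$), so by the base case / induction it has a section $\tau\colon \PP^1 \to \mcX'$ giving a non-zero Gromov-Witten invariant of a smooth model of $\mcX'$. The composite $\PP^1 \xrightarrow{\tau} \mcX' \to \mcX$ is then a section of $\mcX \to C$, and the bulk of the work is to promote the two separate non-vanishing statements on $\mcZ$ and on $\mcX'$ into a single non-vanishing statement on $\mcX$ along the class $\beta$ of this composed section.

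The mechanism for the last step is a degeneration/gluing argument for Gromov-Witten invariants: one builds a nodal curve consisting of the section curve in $\mcX'$ (mapping into the fiber direction of $\mcX \to \mcZ$) glued to curves realizing the non-vanishing invariant of $\mcZ$ lifted to $\mcX$, and uses the splitting axiom together with the fact that $F$ is rationally connected (so that rational curves in the fibers of $\mcX \to \mcZ$ can be attached freely, and point and $\psi$-class insertions can be absorbed) to show the relevant invariant on $\mcX$ is non-zero. Concretely, I expect to pick $\beta = \tilde\sigma + (\text{vertical corrections})$, where $\tilde\sigma$ lifts $\sigma$ via $s$, add fiberwise rational curves to make the virtual dimension and incidence conditions match, and then compute via the product formula / splitting axiom, reducing the $\mcX$-invariant to a product of the known non-zero $\mcZ$-invariant with a fiberwise invariant of the rationally connected $F$ (the latter non-zero by Conjecture~\ref{conj:symrc} in dimension $\le 2$, which is known). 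The class $I$ pulled back from $\overline{M}_{0,m}$ is propagated by choosing $I$ on $\mcX$ to be the pullback of the class controlling the combinatorics of this degeneration.

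The main obstacle is precisely this gluing step: matching virtual dimensions, controlling which boundary strata of $\overline{M}_{0,m}(\mcX,\beta)$ contribute, and ensuring no unwanted cancellation between strata, all while keeping the insertions of the required form (two point classes, arbitrary cohomology classes, and one class pulled back from the moduli of curves). In particular, $\mcX \to \mcZ$ is only a fibration generically, so one must either choose models where the relevant curves avoid the bad locus or argue that the contributions from degenerate fibers do not affect the count; and the deformation invariance / symplectic nature of the argument (as opposed to an algebraic one producing an honest rational point) must be respected throughout, which is why Theorem~\ref{thm:generic} is invoked to move freely between birational models at each level of the tower.
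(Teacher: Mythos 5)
Your overall architecture — induct up the tower, reduce to fibers of dimension at most $2$, restrict the fibration to a curve in the base realizing the non-zero invariant, and then lift — matches the paper's strategy, and your use of Theorem \ref{thm:generic} to move between birational models is exactly right. But the step you yourself flag as ``the main obstacle'' is the entire content of the proof, and the mechanism you propose for it (a degeneration/gluing argument via the splitting axiom, or a ``product formula'' reducing the $\mcX$-invariant to a product of a $\mcZ$-invariant with a fiberwise invariant) is not what works and would not close the gap. There is no product formula for Gromov--Witten invariants of a fibration that is not a product, and a splitting-axiom computation over boundary strata of $\overline{M}_{0,m}(\mcX,\beta)$ gives a sum of contributions with no control over signs, so non-vanishing of the two ``factor'' invariants does not imply non-vanishing of the glued one.

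The missing idea is the notion of an \emph{enumerative} Gromov--Witten invariant (Definition \ref{def:enumerative}) and the way it is propagated. The paper strengthens the induction hypothesis: at each stage one produces not merely a non-zero invariant but a non-zero \emph{enumerative} invariant with point insertions, and Lemma \ref{lem:mfs} lifts such an invariant through a Mori fiber space with del Pezzo or $\PP^1$ fibers. The lift is not a gluing computation: one chooses constraints on the total space (points, pullbacks of the base constraints, and powers of an ample divisor) so that every stable map contributing to the invariant is forced to lie in the restricted fibration $\tilde Z$ over a single general curve of the base family and, by the irreducibility statement behind Theorem \ref{thm:fano} (see Remark \ref{rem}), is an irreducible embedded curve; the virtual count then equals an honest non-zero count. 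Your induction hypothesis, which only records ``a section whose class gives a non-zero invariant,'' is too weak to run this argument at the next level, and your inductive step also skips the reduction (via refining the tower into extremal contractions and running relative MMP, then resolving the terminal models) needed to put each stage into the Mori-fiber-space form to which Theorem \ref{thm:fano} and Lemma \ref{lem:mfs} apply.
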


As for Question \ref{ques:totalspace} and \ref{ques:uniruled}, we need some geometric assumptions on the base.

\begin{defn}\label{def:enumerative}
A Gromov-Witten invariant $\langle A_1, \ldots, A_n \rangle^X_{0, \beta}$ is \emph{enumerative} if 
\begin{enumerate}
\item  For any fixed subvarieties $Z_1, \ldots, Z_s$, the classes $A_i \in H^*(X, \ZZ)$ can be represented by irreducible subvarieties $Y_i$ of $X$ such that the intersection $Y_i \cap Z_j$ is empty unless the intersection product $A_i \cdot [Z_j]=[Y_i] \cdot [Z_j]$ is non-zero.
\item There is a way of choosing the representatives of $A_i$ such that the only curves in class $\beta$ that can meet all the representatives are \emph{embedded} irreducible rational curves.
\end{enumerate}
\end{defn}

\begin{rem}

\begin{enumerate}
\item By choosing the subvariety $Z$ to be a point, the first condition implies that the subvarieties $Y_i$ should be ``moving" and have no ``base point". In applications, we usually choose the constraints to be a point or the intersection of some base-point-free (or even very ample) divisors.
\item The embeddedness in the second condition is the key requirement. There are examples of Gromov-Witten invariants which count the number of irreducible \emph{immersed} rational curves. These kind of invariants should also be considered as ``enumerative". However, for technical reasons, the proof will be greatly simplified if we work with embedded rational curves. The most interesting case to us would be the case where there are two point classes in the constraints (i.e. symplectic rational connectedness). In dimension at least $3$, a general such curve is embedded. In dimension $2$, one can always make such a choice.
\item The definition in particular implies that the Gromov-Witten invariant is the same as the number of rational curves satisfying the incidence relations if the constraints are in general position, hence the name.
\end{enumerate}
\end{rem}

\begin{thm}\label{thm:totalspace}
Let $\pi: X \to Y$ be a morphism between smooth projective varieties. Assume that $Y$ has a non-zero enumerative Gromov-Witten invariant of the form $\langle [pt], [pt], A_1, \ldots, A_n \rangle^Y_{0, \beta}$. Assume the generic fiber $X_\eta$ (over the function field $K(Y)$ of $Y$) has a fibration structure $$X_\eta=X_n \to X_{n-1} \to \ldots \to X_1 \to \SP K(Y),$$ where the generic fiber of each morphism is a rationally connected variety of dimension at most $2$.
Then $X$ is symplectic rationally connected.
\end{thm}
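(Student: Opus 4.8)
The plan is to use the enumerative hypothesis on $Y$ to reduce to a base of dimension one --- where Theorem~\ref{thm:section} applies --- and then to glue the fibrewise information back together. No separate induction on the length $n$ of the tower is needed, since Theorem~\ref{thm:section} already accepts towers of arbitrary length over a curve; the only new content is the passage from a curve to a higher-dimensional base.

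First I would fix a nonzero enumerative invariant $\langle [pt],[pt],A_1,\dots,A_n\rangle^Y_{0,\beta}$ and, via Definition~\ref{def:enumerative}, choose general representatives: points $p_1,p_2$ and irreducible subvarieties $V_1,\dots,V_n$ of $Y$ so that the curves of class $\beta$ meeting all of them form a finite set of embedded rational curves $C_0^{(1)},\dots,C_0^{(N)}$, with $N$ equal to the (nonzero) invariant. Fix one of them, $C_0\cong\PP^1$ (replace it by its normalisation in the surface case if it is merely immersed). Since $p_i$ and $V_j$ are general, $C_0$ meets the dense open $U\subseteq Y$ over which $\pi$ is smooth and over which the tower $X_n\to\dots\to X_1\to\SP K(Y)$ spreads out; hence the unique component $W$ of $\pi^{-1}(C_0)$ dominating $C_0$ is smooth over the generic point of $C_0$, and its generic fibre again carries a tower with rationally connected fibres of dimension at most $2$. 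Passing to a resolution $\widetilde{W}$ of $W$, with induced map $\widetilde{W}\to C_0\cong\PP^1$, Theorem~\ref{thm:section} provides a section $\sigma$ whose class gives a nonzero Gromov-Witten invariant of $\widetilde{W}$; I will use it in the form $\langle [pt],[pt],B_3,\dots,B_m,I\rangle^{\widetilde{W}}_{0,[\sigma]}$ with the two point insertions supported at points lying in fibres of $\widetilde{W}\to C_0$, as its proof produces. Choosing these points and a general member of the family of such sections, we may take them disjoint from the exceptional locus of $\widetilde{W}\to W$, so that the two points are honest points of $X$ and $\sigma$ maps to an embedded rational curve in $W\subseteq X$.

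I would then build the required invariant of $X$ in the class $\gamma\in H_2(X,\ZZ)$ obtained by pushing $[\sigma]$ forward to $X$, noting $\pi_*\gamma=\beta$. The idea is that the preimages $\pi^{-1}(p_1),\pi^{-1}(p_2)$ (as cohomology classes) and the pullbacks $\pi^*A_1,\dots,\pi^*A_n$ should pin the image of a curve in class $\gamma$ to the finite rigid set $\{C_0^{(k)}\}$, while the constraints coming from the $\widetilde{W}$-invariant --- whose two point insertions are now points of $X$ --- should pin it down to a section of $\pi^{-1}(C_0^{(k)})\to C_0^{(k)}$; with a suitable class pulled back from $\overline{M}_{0,\bullet}$ to correct the dimension count, the resulting Gromov-Witten invariant of $X$ ought to factor as the product of the nonzero $Y$-count and the nonzero $\widetilde{W}$-section count, hence be nonzero, and its two $[pt]$ insertions display $X$ as symplectic rationally connected.

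The main obstacle is making this factorisation precise. One has to set up the constraints on $X$ so that they are not redundant --- the constraints pulled back from $\widetilde{W}$ are ``vertical'' and, combined with the ones coming from $Y$, could over-cut the moduli space, so the dimension bookkeeping and the choice of the $\overline{M}_{0,\bullet}$-class require care --- and one has to show that the only stable maps contributing are the expected embedded sections: in particular, reducible configurations whose image on $Y$ is a proper subcurve of some $C_0^{(k)}$ together with vertical components must be excluded, which is where the enumerativeness hypothesis on $Y$ enters, and the effect of the birational modification $\pi^{-1}(C_0)\dashrightarrow\widetilde{W}$ must be controlled (legitimate because, in the nonzero count on $\widetilde{W}$, the section moves in a family dominating $\widetilde{W}$). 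Finally one must rule out sign cancellation among the contributions of the various $C_0^{(k)}$; I expect this from the embeddedness built into Definition~\ref{def:enumerative} together with positivity of the relevant normal bundles (horizontally $N_{C_0/Y}$ is globally generated for a general $C_0$, and vertically the section from Theorem~\ref{thm:section} is unobstructed of the expected dimension). The conceptually cleanest route, which would subsume all of these points, is to prove a genuine product formula for the Gromov-Witten invariants of $X\to Y$ over the rigid curves $C_0^{(k)}$ --- this is exactly the ``lifting'' of invariants from base to total space referred to in the abstract.
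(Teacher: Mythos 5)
Your overall strategy---restrict the fibration to the finitely many embedded curves $C_0^{(k)}$ in $Y$ cut out by the enumerative constraints, and invoke the fibration-over-a-curve result there---is indeed the geometric heart of the paper's argument (it is exactly what happens inside Lemma~\ref{lem:mfs}, where $\tilde Z$ is the preimage of the universal curve over the finite set of curves through the constraints). But your proposal has a genuine gap at its central step: the passage from a non-zero invariant of $\widetilde W$ (a resolution of a subvariety of $X$) to a non-zero invariant of $X$ is exactly the ``comparison of Gromov--Witten invariants of the ambient space with those of subvarieties'' that the paper flags as the main difficulty, and you do not carry it out. You list the obstacles (redundant vertical constraints, reducible configurations with vertical components, sign cancellation, the effect of $\pi^{-1}(C_0)\dashrightarrow\widetilde W$) and then defer to ``a genuine product formula'' which you do not prove; that formula is essentially the content of the theorem. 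A second problem is your use of Theorem~\ref{thm:section} as a black box on $\widetilde W$: that theorem is proved by passing through the Hu--Li--Ruan correspondence, which produces a non-zero \emph{descendant} invariant of a priori unknown shape, so you are not entitled to assume the $\widetilde W$-invariant has the form $\langle [pt],[pt],B_3,\dots,B_m,I\rangle$ with point insertions you can place freely---and that form is precisely what your gluing needs.

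The paper circumvents the product formula entirely. It first builds (as in the proof of Theorem~\ref{thm:section}) a birational model $X'\to Y$ realized as a tower of Mori fiber spaces with del Pezzo or conic fibers, and lifts the invariant one step at a time via Lemma~\ref{lem:mfs}: there the invariant on the new total space is taken with carefully chosen \emph{extra} insertions ($[M^d]$ and $[L]\cdot[M]$ for an ample $M$), so that every stable map meeting all constraints is forced into $\tilde Z$, and the irreducibility argument of Theorem~\ref{thm:fano} (see Remark~\ref{rem}: only the intersection numbers with $K$, the discrepancy divisor, and an ample divisor are used) shows all such curves are irreducible. The resulting global invariant of $X'$ is then an honest count of embedded curves, hence non-zero and enumerative with two point insertions---no factorization, dimension bookkeeping with an $\overline{M}_{0,\bullet}$-class, or sign analysis is needed. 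Finally, the invariant is transported from $X'$ back to $X$ by factoring the birational map into blow-ups with centers not dominating $Y$, specializing the two point classes to the $X$-side in the deformation to the normal cone, and applying the lower-triangular correspondence of Theorem~\ref{thm:Correspondence}; this last global comparison between $X$ and $X'$ is absent from your proposal.
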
 

\begin{rem}
\begin{enumerate}
\item Here are some examples of $Y$ satisfying the condition in this theorem.
\begin{itemize}
\item $Y$ is a rationally connected surface.
\item $Y$ is a Fano $3$-fold or a rationally connected $3$-fold with Picard number $2$ (\cite{SymRC}).
\item $Y$ is a rational homogeneous space.
\end{itemize}
For each rationally connected $3$-fold $Y$, unless the minimal model program only produces a birational model of $Y$ as a $\QQ$-factorial, non-Gorenstein, terminal Fano $3$-fold of Picard number $1$, there is another $3$-fold $Y'$ which is birational to $Y$ and satisfies the condition. Assuming the smooth locus of a $\QQ$-factorial terminal Fano $3$-fold of Picard number $1$ is rationally connected (which is a conjecture in birational geometry), then such a $Y'$ also exists. Thus a fibration in the theorem over a $3$-dimensional base is, birationally, symplectic rationally connected. 

\item It is in general an open question whether symplectic rational connectedness is a birational invariance. However in the special cases of Theorem \ref{thm:totalspace}, we are able to show a special kind of birational invariance based on some geometric arguments.
\end{enumerate}
\end{rem}

\begin{thm}\label{thm:uniruled}
Let $X$ be a smooth projective variety and let $X \dashrightarrow \Sigma$ be a dominant rational map which is a proper morphism when restricted to an open dense subset. Assume the generic fiber $X_\eta$ has a fibration structure $$X_\eta=X_0 \to X_1 \to \ldots \to X_n \to \SP K(\Sigma),$$ where the generic fiber of each morphism is a rationally connected variety of dimension at most $2$. Also assume $\Sigma$ is uniruled and has a projective birational model which has one minimal uniruled class given by embedded rational curves. Then there is a curve class $\beta$ in $H_2(X, \ZZ)$ with a non-zero Gromov-Witten invariant of the form $\langle [pt], \ldots \rangle^{X}_{0, \beta}$. Furthermore, the image of a curve of class $\beta$ in $\Sigma$ is not a point.
\end{thm}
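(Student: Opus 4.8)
The plan is to run a ``one point'' version of the argument behind Theorem \ref{thm:totalspace}, with the enumerative two-point invariant of the base replaced by the minimal uniruled class on the given birational model of $\Sigma$. First I would reduce to the case where $\Sigma$ itself carries a minimal uniruled class $\alpha$ represented by embedded rational curves: a general member of the covering family of such curves is not contracted by any birational map, so the conclusion that a curve of class $\beta$ is not contracted to a point in $\Sigma$ is insensitive to this replacement, and replacing $X$ by a smooth projective model mapping to this $\Sigma$ changes neither the generic fiber $X_\eta$ and its tower of rationally connected fibrations nor the Gromov-Witten invariants with a point insertion, by the birational-invariance arguments already used for Theorem \ref{thm:totalspace}. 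Fix a general member $R \cong \PP^1$ of the covering family of embedded rational curves in class $\alpha$ and a general point $x \in X$ with image $\sigma = \pi(x) \in \Sigma$; since the curves in class $\alpha$ dominate $\Sigma$ we may take $R$ to pass through $\sigma$.

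The next step is the \emph{lifting step}. Restricting $\pi$ over $R$ (on the locus where it is a proper morphism) and passing to a smooth projective model gives a fibration $\tx_R \to R \cong \PP^1$; for a general $R$ the tower $X_\eta = X_0 \to \cdots \to X_n \to \SP K(\Sigma)$ spreads out over an open subset of $\Sigma$ and restricts to $R$, so the generic fiber of $\tx_R \to R$ inherits a tower of rationally connected fibrations with fibers of dimension at most $2$. Hence Theorem \ref{thm:section} applies and yields a section $s_R : R \to \tx_R$ whose class has a non-zero Gromov-Witten invariant of $\tx_R$; I would extract from the proof of Theorem \ref{thm:section} that this invariant can be taken to have a point insertion in the fiber over $\sigma$ together with insertions pulled back from $\Mgnb{0,m}$, so that imposing in addition a point constraint on $R$ at $\sigma$ records a section through $x$. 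Pushing the class $[s_R(R)]$ forward to $X$ produces a class $\beta \in H_2(X, \ZZ)$ which, by a standard countability and specialization argument, may be taken independent of the general $R$, and which satisfies $\pi_* \beta = \alpha \neq 0$; this already gives the last assertion of the theorem.

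It remains to assemble a non-zero invariant $\langle [pt], \ldots \rangle^X_{0, \beta}$ on $X$. I would use as insertions the point class of $x$, the classes pulled back from $\Mgnb{0,m}$ coming from Theorem \ref{thm:section}, and, if necessary, pullbacks of ample divisors from $\Sigma$ to play the role of rigidifying constraints for the class $\alpha$. The crucial point is non-degeneration: a genus-zero stable map to $X$ in class $\beta$ meeting the general point $x$ maps in $\Sigma$ to a connected genus-zero stable map in class $\alpha$ through the general point $\sigma$, which by minimality of $\alpha$ is irreducible and hence, by the embeddedness hypothesis, an embedded member $R$ of the covering family; the remaining part of the source is then forced, by rational connectivity of the tower fibers and the enumerative behavior built into Theorem \ref{thm:section}, to be a section of $\tx_R \to R$ of the prescribed class. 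Thus the relevant moduli space fibers over the moduli of the curves $R$ through $\sigma$ --- which is compact, again by minimality of $\alpha$ --- with fiber the moduli of such sections, and the invariant factors as the product of the base count (a positive multiplicity, since $\alpha$ is minimal and the curves $R$ cover $\Sigma$) and the section count of Theorem \ref{thm:section}, both non-zero.

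The main obstacle is this last factorization. One has to carry it out in families over the moduli of the minimal rational curves $R$, showing there is no boundary contribution and that the virtual count splits as claimed --- in effect a relative, or degeneration, computation in which minimality of $\alpha$ prevents $x$ from lying on a contracted component or on a lower-area component of a broken base curve, and embeddedness keeps the fibrations $\tx_R \to R$ uniformly within the range where Theorem \ref{thm:section} applies. Spreading out the tower structure to a general $R$, and comparing virtual classes under the birational modifications of $X$ over $\Sigma$, is routine but needs to be done carefully; this is where the proof will require the most work.
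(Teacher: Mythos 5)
Your overall strategy is the paper's: lift the minimal uniruled class of (a birational model of) $\Sigma$ through the tower of low-dimensional rationally connected fibrations by restricting to the curves through a general point, invoke the section-type result there, and transfer the resulting invariant across the birational modifications. But two steps that you treat as routine are exactly where the paper does its real work, and as written they are gaps. First, the ``factorization'' of the invariant of $X$ over the moduli of minimal curves $R$ through $\sigma$, which you yourself flag as the main obstacle, is not carried out by any virtual-class splitting in the paper; instead, Lemma \ref{lem:mfs} adds explicit extra insertions ($k'$ copies of $[M^d]$ and $k''$ copies of $[L]\cdot[M]$ for a very ample $M$) and uses the enumerativity hypothesis plus the intersection-number argument of Remark \ref{rem} to force \emph{every} stable map contributing to the invariant to be an irreducible curve inside the (smooth, terminal) Fano fibration $\tilde{Z}$ over the finitely many minimal curves through the point; Theorem \ref{thm:fano} then gives the count directly. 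Without this mechanism your claim that ``the invariant factors as the product of the base count and the section count'' has no proof, and a naive fibering of moduli spaces does not control boundary or excess contributions. Note also that the lifting must be done one Mori fiber space step at a time (Lemma \ref{lem:mfs} iterated), not by quoting Theorem \ref{thm:section} for $\tilde{X}_R \to R$ wholesale, because an invariant of the subvariety $\tilde{X}_R$ does not by itself say anything about an invariant of $X$.

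Second, your opening reduction asserts that replacing $X$ by a smooth model over the good birational model of $\Sigma$ ``changes neither the generic fiber nor the Gromov-Witten invariants with a point insertion,'' and that the non-contraction statement is ``insensitive to this replacement'' because general minimal curves in $\Sigma$ are not contracted. This addresses the wrong object: after transporting the invariant back to $X$ through weak factorization and the Hu--Li--Ruan correspondence (Theorem \ref{thm:Correspondence}), the surviving non-zero invariant lives in an a priori different curve class, and one must rule out that this class is supported in a fiber of $X \dashrightarrow \Sigma$. The paper's argument is specific: a class contracted in $\Sigma$ could only arise from a disconnected relative invariant, but such a term carries insertions coming from the cohomology of the blow-up center, whose representatives can be chosen away from a general fiber, while the curve must also pass through a general point; this contradiction is what secures the final assertion. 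Your proposal needs this (or an equivalent) argument to close the proof.
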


\begin{rem}
\begin{enumerate}
\item The assumption on the rational map $X \dashrightarrow Z$ naturally appears in the MRC quotient. 
\item The assumption on the minimal uniruled class holds if $\dim Z=2$, but not in higher dimensions. However, there might be ways to get rid of this assumption, which is added here to simplify the comparison of Gromov-Witten invariants of the ambient space and the subvariety (c.f. Remark \ref{rem:immersedcase}).
\item The reappearance of the conditions of the fiber in Theorems \ref{thm:section}, \ref{thm:uniruled}, and \ref{thm:totalspace} is not a coincidence. We essentially reduce the problem to the case of Theorem \ref{thm:section}. That is, in our situation, Question \ref{ques:totalspace} and \ref{ques:uniruled} follows from Question \ref{ques:section} and some manipulations of Gromov-Witten invariants. In general it is difficult to compare Gromov-Witten invariants of the ambient space with those of subvarieties. And the geometric assumptions make such a comparison possible.

\end{enumerate}
\end{rem}

\textbf{Acknowledgments:} The author would like to thank Tom Graber, Jason Starr and Aleksey Zinger for many helpful discussions. The connection between Conjecture \ref{conj:Kstrong} and  Question \ref{ques:uniruled} was pointed out by J{\'a}nos Koll{\'a}r. 

\section{Comparison of Gromov-Witten invariants via the degeneration formula}
In this section we recall the main technical result of Hu-Li-Ruan \cite{HLR}, which allows one to compare Gromov-Witten invariants of a variety and its blow-up. Then apply it to prove Theorem \ref{thm:generic}. Readers familiar with their results can safely skip to the last subsection. 

For the reader's convenience, we briefly summarize the comparison result we need from \cite{HLR}. In general one can write down the degeneration formula to compare such invariants. However the formula usually involves a sum of many terms, which is difficult to analyze. The result in \cite{HLR} says that only further degenerations of the curve have non-trivial contributions in the sum. They introduced a partial ordering on Gromov-Witten invariants, which measures the extent of the degeneration of the curve. Also the degeneration formula gives an \emph{invertible} linear map between the space of absolute and relative Gromov-Witten invariants. In particular, if one starts with a non-zero Gromov-Witten invariant, one is guaranteed to get a non-zero Gromov-Witten invariant, although the exact form of the two invariants are (in general) different and no so easy to see directly.

\subsection{Descendant GW-invariants, Relative GW-invariants, and the Degeneration formula}
In this section we recall some variants of Gromov-Witten invariants.

\begin{defn}\label{def:GW}
Let $\overline{\mathcal{M}}_{g, n}^{X, \beta}$ be the moduli stack of genus $g$, $n$-pointed stable maps to $X$ whose curve class is $\beta$. Let $\mcL_i$ be the line bundle on $\overline{\mathcal{M}}_{g, n}^{X, \beta}$ whose fiber over each point $(C, p_1, \ldots, p_n)$ is the restriction of the sheaf of differentials of the curve $C$ to the point $p_i$. Let $\psi_i$ be the first Chern class of $\mcL_i$. Then the descendant Gromov-Witten invariant is defined as 
\[
\langle \tau_{k_1}\gamma_1, \ldots, \tau_{k_n}\gamma_n \rangle^{X}_{g, \beta}=\int_{[\overline{\mathcal{M}}_{g, n}^{X, \beta}]^{\text{virt}}} \prod_i \psi_i^{k_i} ev_i^*\gamma_i,
\]
where $ev_i$ is the evaluation map given by the $i$-th marked point, and $\gamma_i \in H^*(X, \QQ)$.

For $\langle \tau_{k_1}\gamma_1, \ldots, \tau_{k_n}\gamma_n \rangle^{X}_{g, \beta}$, we can associate a decorated graph $\Gamma$ of one vertex decorated by $\beta$ and a tail for each marked points, decorated by $(k_i, \gamma_i)$. The resulting graph $\Gamma(\{(k_i, \gamma_i)\})$ is called a \emph{decorated weighed graph}.
\end{defn}

Next we discuss \emph{relative} Gromov-Witten invariants, which were first introduced in the symplectic category by Li-Ruan~\cite{LiRuan} and in the algebraic category by Jun Li~\cite{LiJun1}, \cite{LiJun2}. We will not recall the precise definition here since it is not needed. The reader may refer to the above-mentioned papers for more details.

Intuitively, the relative Gromov-Witten invariants count the number of stable maps satisfying certain incidence constraints and having prescribed tangency conditions with a given divisor. Let $X$ be a smooth projective variety and $D \subset X$ be a smooth divisor. Fix a curve class $\beta$ such that the intersection number $D \cdot \beta =m$ is non-negative. The relative Gromov-Witten invariants are not defined if the number $D \cdot \beta$ is negative. Also choose a partition $\{m_i, i=1, 2, \ldots, s\}$ of $m$. Then the relative Gromov-Witten invariants count the number of stable maps $f: (C, p_1, \ldots, p_r, q_1, \ldots, q_s) \rightarrow X$ with $r+s$ marked points such that the first $r$ points (absolute marked points) are mapped to cycles in $X$ and the last $s$ points (relative marked points) are mapped to some cycles in $D$ and $f^*D=\sum m_i q_i$. We can also define descendant relative Gromov-Witten invariants. We write such invariants as 
\[
\langle \tau_{d_1}\gamma_1, \ldots, \tau_{d_r}\gamma_r \vert (m_1, \delta_1), \ldots, (m_s, \delta_s)\rangle^{(X, D)}_{g, \beta}
\] 
where $\gamma_i \in H^*(X, \QQ), \delta_j \in H^*(D, \QQ)$. We also use the abbreviation 
\[
\langle \Gamma \{(d_i, \gamma_i)\} \vert {\mcT}_s \rangle^{X, D}_{g, \beta}
\]
 following  \cite{HLR}, where 
\[
\mcT_s=\{(m_1, \delta_1), \ldots, (m_s, \delta_s)\}
\]
is called the \emph{weighted partition}. In the degeneration formula, we have to consider stable maps from disconnected domains. The corresponding relative invariants are defined to be the product of those of stable maps from connected domains. Such invariants are denoted by 
\[
\langle \Gamma^\bullet \{(d_i, \gamma_i)\} \vert \mathcal{T}_s \rangle^{X, D}_{g, \beta}.
\]
We use $\bullet$ to indicate that the invariant is for a disconnected curve as \cite{HLR} and \cite{MP}. Finally we note that we can represent these invariants by decorated weighted graphs (c.f. Section 3.2 in \cite{HLR}), which is the disjoint union of the graphs described in Definition \ref{def:GW}.

Now we describe the degeneration formula. Let $W \rightarrow S$ be a projective morphism from a smooth variety to a pointed curve $(S, 0)$ such that a general fiber is smooth and connected and the fiber over $0$ is the union of two smooth irreducible varieties ($W^+, W^-$) intersecting transversely at a smooth subvariety $Z$. Let $\gamma_i$ be cohomology classes in a general fiber. Assume that the specialization of $\gamma_i$ in $W_0$ can be written as $\gamma_i(0)=\gamma_i^++\gamma_i^-$, where $\gamma_i^+ \in H^*(W^+, \QQ)$ and $\gamma_i^- \in H^*(W^-, \QQ)$. 

We first specify a map from a curve of genus $g$ to $W^+\cup W^-$ with the following properties:
\begin{enumerate}
\item [(i)] Each connected component is mapped to either $W^+$ or $W^-$ and carries a degree $2$ homology class;
\item [(ii)] The marked points are not mapped to $Z$;
\item [(iii)] Each point mapped to $Z$ carries a positive integer representing the order of the tangency.
\end{enumerate} 
The above data gives two graphs describing relative stable maps from possibly disconnected domains to $(W^+, Z)$ and $(W^-, Z)$, the graph of which are denoted by $\Gamma^\bullet_+$ and $\Gamma^\bullet_-$. From (iii) we get two partitions $T_+$ and $T_-$. Call the above data a \emph{degenerate genus $g$ $(\beta, l)$ graph} if the resulting pairs $(\Gamma^\bullet_+, T_+)$ and $(\Gamma^\bullet_-, T_-)$ satisfies the following: the total number of marked points is $l$, $T_+=T_-$, and the identification of relative tails produces a connected graph of $W$ with total homology class $[\beta]$ and genus $g$.

Denote by $Aut(T_k)$ the automorphism group of such partitions. Let $\{\delta_i\}$ be a self-dual basis of $H^{*}(Z, \QQ)$. By (iii), we have a weighted partition $\mcT_k=\{(t_j, \delta_{a_j})\}$ and its dual partition $\check{\mcT}_k=\{(t_j, \check{\delta}_{a_j})\}$, where $\check{\delta}_{a_j}$ is the Poincar{\'e} dual of $\delta_{a_j}$. Let $\beta_+$ (resp. $\beta_-$) be the total homology class of the curves mapped to $W^+$ (resp. $W^-$) in a degenerate $(\beta, l)$ graph. Then $\beta=\beta_+ +\beta_-$. The degeneration formula expresses the Gromov-Witten invariants of a general fiber in terms of the relative Gromov-Witten invariants of the degeneration in the following way:
\[
\langle \prod_i \tau_{d_i} \gamma_i \rangle^{W_t}_{g, \beta}=\sum \Delta(\mcT_k) \langle \Gamma^\bullet \{(d_i, \gamma_i^+)\} \vert \mathcal{T}_k \rangle^{W^+, Z}_{g_+, \beta_+} \langle \Gamma^\bullet \{(d_i, \gamma_i^-)\} \vert \check{\mathcal{T}}_k \rangle^{W^-, Z}_{g_-, \beta_-},
\]
where the summation is taken over all possible degenerate genus $g$ $(\beta, l)$ graphs, and
\[
\Delta(\mcT_k)= \vert Aut(T_k)\vert \cdot \prod_j t_j.
\]
By convention, if $\beta_+$ or $\beta_-$ is the zero homology class, the relative invariant is defined to be $1$.

In this paper we are mainly interested in the following special case of such degenerations: the deformation to the normal cone. Namely, let $X$ be a smooth projective variety and $S \subset X$ be a smooth subvariety. Then we take $W$ to be the blow-up of $X \times \AAA^1$ with blow-up center $S \times 0$. In this case, $W^- \cong \tx$, the blow-up of $X$ along $S$, and $W^+\cong \PP_S(\OO \oplus N_{S/X})$.

\subsection{The blow-up/blow-down correspondence}

Let $X$ be a smooth projective variety and $S\subset X$ be a smooth subvariety of codimension $k$. Denote by $\tilde{X}$ the blow-up of $X$ along $S$ and by $E$ the exceptional divisor. Here we allow $S$ to be a codimension $1$ subvariety, i.e. a divisor. In this case $\tilde{X}$ is isomorphic to $X$ and $E$ is isomorphic to $S$.

Let $\theta_1, \theta_2, \ldots, \theta_{m_S} \in H^*(S, \QQ)$ be a self dual basis of $S$. We now describe a basis of the exceptional divisor $E$. Note that $E=\PP_S(N_{S/X})$ is a $\PP^{k-1}$-bundle over $S$. Let $\lambda$ be the first Chern class of the relative $\OO(-1)$ bundle over $\PP_S(N_{S/X})$. Then the cohomology classes
\[
\pi_S^* \theta_i \cup \lambda^j, ~1 \leq i \leq m_S, ~0 \leq j \leq k-1,
\]
form a basis of $E$. Denote it by $\Theta=\{\delta_i \}$.

\begin{defn}
A \emph{standard (relative) weighted partition $\mu$} is a partition
\[
\mu=\{(\mu_1, \delta_{d_1}), \ldots, (\mu_{l(\mu)}, \delta_{d_{l(\mu)}})\},
\]
where $\mu_i$ and $d_i$ are positive integers with $d_i \leq k m_S$. $l(\mu)$ is called the \emph{length} of the partition.

For $\delta=\pi_S^* \theta \cup \lambda^j \in H^*(E, \QQ)$, with $j \leq k-1$, define
\[
\deg_S(\delta)=\deg \theta, \deg_f(\delta)=2j.
\]

For a standard weighted partition $\mu$, define
\[
\deg_S(\mu)=\sum_{i=1}^{l(\mu)} \deg_S (\delta_{d_i}), \deg_f(\mu)=\sum_{i=1}^{l(\mu)}\deg_f(\delta_{d_i}).
\] 

\end{defn}

Let $\sigma_1, \ldots, \sigma_{m_X}$ be a basis of $H^*(X, \QQ)$. Then the set of cohomology classes
\[
\gamma_j=\pi^* \sigma_j, 1 \leq j \leq m_X,
\]
\[
\gamma_{j+m_X}=\iota_*(\delta_j), 1 \leq j \leq k m_S
\]
generate a basis of $H^(\tilde{X}, \QQ)$, where $\pi: \tilde{X} \to X$ is the blow-up along $S$, $\iota : E \rightarrow \tilde{X}$ is the inclusion and $\iota_*$ is the induced Gysin map.

\begin{defn}
A \emph{connected standard relative Gromov-Witten invariant} of $(\tilde{X}, E)$ is of the form
\[
\langle \omega \vert \mu \rangle^{\tilde{X}, E}_{0, A}=\langle \tau_{k_i}\gamma_{L_1}, \ldots, \tau_{k_n}\gamma_{L_n} \vert \mu \rangle^{\tilde{X}, E}_{0, A},
\]
where $A$ is an effective curve class on $\tx$, $\mu$ is a standard weighted partition with $\sum \mu_j = E \cdot A$, and $\gamma_{L_i}=\pi^* \sigma_{L_i}$.
\end{defn}

We write $\Gamma(\omega) \vert \mu$ for the decorated graph of such invariants.

We now relate absolute invariants of $X$ to relative invariants of $\tilde{X}$.

To a relative insertion $(m, \delta)$ with $\delta=\pi_S^*\theta_i \cup \lambda^j$, we associate the absolute insertion $\tau_{d(m, \delta)}(\tilde{\delta})$, where
\[
\tilde{\delta}=\iota_*(\theta_i), d(m, \delta)=km-k+j.
\]
Given a weighted partition $\mu=\{(\mu_i, \delta_{k_i})\}$, we define 
\[
d_i(\mu)=d(\mu_i, \delta_{k_i})=k \mu_i-k+\frac{1}{2}\deg_f(\delta_{k_i}),
\]
\[
\tilde{\mu}=\{\tau_{d_1(\mu)}(\tilde{\delta}_{k_1}), \ldots, \tau_{d_{l(\mu)}(\mu)}(\tilde{\delta}_{k_{l(\mu)}})\}.
\]
Given a standard relative invariant $\langle \Gamma^\bullet(\omega) \vert \mu\rangle^{\tilde{X}, E}$, we define the absolute descendant invariant associated to the relative invariant to be 
\[
\langle {\Gamma}^\bullet(\omega, \tilde{\mu})\rangle^X
\]
Here all the insertions $\omega$ in the relative invariants are of the form $\pi^* \sigma_i$; the corresponding insertions in the absolute invariants are just $\sigma_i$.

\begin{defn}
An absolute descendant invariant of $X$ is called a \emph{colored absolute descendant invariant relative to $S$} if its insertions are divided into two collections $\omega$ and $\tilde{\mu}$ such that each insertion in $\omega$ is of the form $\tau_{d_i}\sigma_i$ and each insertion in $\tilde{\mu}$ is of the form $\tau_{d_k} \tilde{\delta}_k$. 
\end{defn}

\begin{rem}
An absolute invariant may give different colored invariants depending on how one groups the insertions.
\end{rem}

\begin{defn}
If $k=1$, then a colored absolute descendant invariant of $X$ relative to $S$ (with curve class $\beta$) is called \emph{admissible} if $\sum \mu_j = E \cdot \beta$.
\end{defn}

The following lemma is essentially Lemma 5.14 in \cite{HLR}. Note that in their paper they only consider the case of primary Gromov-Witten invariants. But the proof is actually the same.
\begin{lem}\label{lem:bijection}
If $\mu \neq \mu'$, then $\tilde{\mu} \neq {\tilde{\mu}}'$. Therefore there is a natural bijection between the set of colored weighted absolute graphs relative to $S$ and the set of weighted relative graphs in $\tilde{X}$ relative to $E$ if $k>1$. The same is true if we restrict to the admissible ones when $k=1$.
\end{lem}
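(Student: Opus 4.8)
Looking at this problem, I need to prove Lemma \ref{lem:bijection}, which establishes a bijection between colored weighted absolute graphs (relative to $S$) and weighted relative graphs in $\tilde{X}$ (relative to $E$), using the correspondence $\mu \mapsto \tilde{\mu}$.

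The plan is to follow the argument of Lemma 5.14 in \cite{HLR}, checking that the presence of $\psi$-class decorations (descendant insertions) in the absolute invariants does not disrupt the combinatorics. The heart of the matter is the claim that the map $\mu \mapsto \tilde{\mu}$ on standard weighted partitions is injective. So first I would unwind what data $\tilde{\mu}$ retains: each relative insertion $(\mu_i, \delta_{k_i})$ with $\delta_{k_i} = \pi_S^*\theta_i \cup \lambda^j$ is sent to the descendant insertion $\tau_{d_i(\mu)}(\iota_*\theta_i)$ where $d_i(\mu) = k\mu_i - k + j$. Thus from $\tilde{\mu}$ we directly recover, for each part, the cohomology factor $\theta_i$ (hence $\deg_S$) and the integer $d_i(\mu) = k\mu_i - k + j$. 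The key point is that the pair $(\mu_i, j)$ is uniquely reconstructible from the single integer $k\mu_i - k + j$: since $0 \leq j \leq k-1$ and $\mu_i \geq 1$, the value $j$ is the residue of $d_i(\mu) + k$ modulo $k$, i.e. $j \equiv d_i(\mu) \pmod k$ with $0 \le j \le k-1$, and then $\mu_i = (d_i(\mu) - j)/k + 1$. Hence $(\mu_i, \delta_{k_i})$ is determined, so $\mu = \mu'$ whenever $\tilde{\mu} = \tilde{\mu}'$ (as multisets of decorated parts). This gives the first sentence.

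Next I would assemble this into the bijection of graphs. A weighted relative graph in $\tilde{X}$ relative to $E$ consists of a decorated graph $\Gamma^\bullet$ together with ordinary tails carrying insertions $\pi^*\sigma_i$ (possibly with $\psi$-powers) and relative tails carrying a standard weighted partition $\mu$; recall all absolute insertions appearing are pullbacks $\pi^*\sigma_i$. The associated colored absolute graph in $X$ is obtained by keeping the same underlying graph and curve classes (via $\pi_*$, which is an isomorphism on curve classes transverse to $E$ in the relevant range, or more precisely the bookkeeping in \cite{HLR}), replacing $\pi^*\sigma_i$ by $\sigma_i$, and replacing the relative partition $\mu$ by the colored collection $\tilde{\mu}$. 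Injectivity of this assignment follows from the partition injectivity just proved together with the fact that the $\sigma_i$ are a basis, so $\pi^*\sigma_i \mapsto \sigma_i$ is reversible. Surjectivity onto colored absolute graphs: given a colored absolute graph with $\omega$-insertions $\tau_{d_i}\sigma_i$ and $\tilde\mu$-insertions $\tau_{d_k}\tilde\delta_k$, one reads off each $\tilde\delta_k = \iota_*\theta_k$, inverts $d_k$ as above to recover $(\mu_k, j_k)$ hence $\delta_{k_k} = \pi_S^*\theta_k \cup \lambda^{j_k}$, and reconstructs the relative partition; when $k = 1$ one must impose admissibility $\sum \mu_j = E\cdot\beta$ to pin down the curve class $A$ on $\tilde X$ (since $\tilde X \cong X$ and the blow-down does not change the class, the constraint $\sum\mu_j = E\cdot A$ is exactly what is needed), which is why the $k=1$ statement is restricted to admissible invariants.

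The main obstacle — really the only subtle point — is the case $k = 1$. When $S$ is a divisor, $\tilde X \cong X$ and $E \cong S$, so a colored absolute invariant of $X$ carries no information distinguishing how $E\cdot\beta$ is partitioned unless we fix that partition's total; moreover different curve classes on $\tilde X$ could a priori map to colored graphs that look the same. Imposing admissibility resolves this: it forces $\sum\mu_j = E\cdot A$, and since $d_i(\mu) = \mu_i - 1$ when $k=1$ (so $j=0$ necessarily as $0\le j\le k-1=0$), each $\mu_i = d_i + 1$ is recovered from the descendant order, and then $A$ is determined by $E\cdot A = \sum(d_i+1)$ together with the non-exceptional part of the class. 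For $k > 1$ no such restriction is needed because the $\lambda^j$ factors, encoded in the residues of the $d_i$ modulo $k$, already carry enough information to recover the full partition and hence the intersection number $E\cdot A$. I would also remark that \cite{HLR} prove this for primary insertions; the verification above shows the descendant decorations $\tau_{k_i}$ on the $\pi^*\sigma_i$ tails are simply carried along unchanged and play no role in the injectivity argument, so their proof applies verbatim.
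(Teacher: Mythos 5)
Your proposal is correct and is essentially the paper's own argument: the paper gives no proof beyond citing Lemma 5.14 of \cite{HLR} and remarking that the presence of descendant insertions changes nothing, and your reconstruction via division with remainder --- recovering $j$ as the residue of $d_i(\mu)$ modulo $k$ and then $\mu_i=(d_i(\mu)-j)/k+1$, with the cohomology factor $\theta_i$ read off from $\tilde{\delta}_{k_i}$ --- is exactly the content of that cited lemma. Your handling of the $k=1$ case (admissibility as the substitute for the information otherwise carried by the $\lambda^j$ residues and the exceptional part of the curve class) and the observation that the $\tau_{k_i}$ decorations on the $\pi^*\sigma_i$ tails are inert are both consistent with the paper's remark.
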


\begin{rem}
Notice that different relative invariants may give the same absolute invariants. But these absolute invariants are different as colored absolute invariants.
\end{rem}

Let $I$ be the set of standard weighted relative graph $\Gamma^\bullet  (\omega) \vert \mu$. 

Define $\RR^I_{\tilde{X}, E}$ to be an infinite dimensional vector space. A standard weighted relative invariant $\langle \Gamma^\bullet (\omega) \vert \mu \rangle^{\tilde{X}, E}$ gives a vector $v_{\tx, E}$ in $\RR^I_{\tilde{X}, E}$. By Lemma~\ref{lem:bijection}, $I$ is also the set of colored standard weighted absolute graphs relative to $S$. Thus we also have an infinite dimensional vector space $\RR^I_{X, S}$. Similarly, an absolute invariant $\langle \Gamma^\bullet (\omega, \tilde{\mu})\rangle^X$ gives a vector $v_{X, S}$ in this vector space.

We can now state the theorem on blow-up/blow-down correspondence  .

\begin{thm}[\cite{HLR}, Theorem 5.15]\label{thm:Correspondence}
Let $\pi: \tilde{X} \rightarrow X$ be the blow-up of a smooth projective variety along a smooth center $S$. Then there is a partial ordering on $I$ such that if we use the same partial ordering on the coordinates of $\RR^I_{\tilde{X}, E}$ and $\RR^I_{X, S}$, then the linear map
\[
A_S: \RR^I_{\tilde{X}, E} \rightarrow \RR^I_{X, S},
\]
given by the degeneration formula such that $A_S(v_{\tx, E})=v_{X, S}$, is a lower triangular map and $A_S$ only depends on $S$ and its normal bundle.
\end{thm}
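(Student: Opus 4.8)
The plan is to apply the degeneration formula of the previous subsection to the deformation to the normal cone of $S$ in $X$, and then to extract a ``leading term'' from the resulting sum. Take $W \to \AAA^1$ to be the blow-up of $X \times \AAA^1$ along $S \times 0$, so that $W_0 = \tilde{X} \cup_E P$ with $P = \PP_S(\OO \oplus N_{S/X})$ and $W_t \cong X$ for $t \neq 0$. Fix a colored absolute descendant invariant $\langle \Gamma^\bullet(\omega, \tilde{\mu}) \rangle^X$ relative to $S$. The insertions in $\omega$ are pulled back from $X$, so they specialize on $W_0$ to classes of the form $\sigma^+ + \sigma^-$ with $\sigma^-$ pulled back from $X$ via the blow-down and $\sigma^+$ pulled back from $X$ via $P \to S \hookrightarrow X$; the insertions collected in $\tilde{\mu}$ are supported near $S$ and can be arranged to be carried by the $P$-side. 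The degeneration formula then writes $\langle \Gamma^\bullet(\omega, \tilde{\mu}) \rangle^X$ as a sum over degenerate genus $0$ graphs of $\Delta(\mcT)\,\langle \Gamma^\bullet_-\rangle^{\tilde{X}, E}\,\langle \Gamma^\bullet_+\rangle^{P, E}$, i.e.\ as a linear combination of relative invariants of $(\tilde{X}, E)$ whose coefficients are combinatorial factors times relative invariants of $(P, E)$.

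Next I would single out the leading term. Among all degenerate graphs contributing to a fixed $\langle \Gamma^\bullet(\omega, \tilde{\mu}) \rangle^X$, consider the one in which the whole curve class $\beta$, all absolute marked points carrying $\omega$, and a curve meeting $E$ with exactly the tangency partition $\mu$ lie on the $\tilde{X}$-side, while the $P$-side carries only components in fibre classes of $P \to S$ attached at the relative points and equipped with the descendant insertions coming from $\tilde{\mu}$. The definitions $d(m,\delta) = km - k + j$ and $d_i(\mu) = k\mu_i - k + \tfrac{1}{2}\deg_f(\delta_{k_i})$ are precisely the ones for which, by a fibre-class (rubber) computation on $P$ — a genus $0$ multiple-cover integral that reduces to a single $\psi$-class integral over a moduli of pointed rational curves equal to $1$ — this $P$-side factor is $1$; together with $\Delta(\mcT)$ this produces a leading coefficient equal to a fixed nonzero constant, and the associated relative invariant is exactly $\langle \Gamma^\bullet(\omega)\vert\mu\rangle^{\tilde{X}, E}$, matched to $\langle \Gamma^\bullet(\omega,\tilde\mu)\rangle^X$ by the bijection of Lemma~\ref{lem:bijection}. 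One then \emph{defines} the partial order on $I$ so that this term is maximal: informally $\Gamma' \le \Gamma$ when $\Gamma'$ arises from $\Gamma$ by a genuine further degeneration — moving curve class or tangency to the $P$-side, splitting off extra components, or refining the partition — with ties broken by total descendant degree. (When $k=1$, where $\tilde{X}\cong X$ and $E\cong S$, one restricts to admissible colored invariants, for which $\sum\mu_j = E\cdot\beta$, so that the leading term remains well defined.)

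Then I would check triangularity. One verifies that every degenerate graph other than the leading one yields a relative invariant $\langle \Gamma^\bullet_-\rangle^{\tilde{X}, E}$ that is \emph{strictly} below $\Gamma^\bullet(\omega)\vert\mu$ in this order: the curve class $\beta_-$ on the $\tilde{X}$-side satisfies $\beta_- \le \beta$, with equality forcing $\beta_+$ to be a sum of fibre classes, and if in addition the tangency partition is not refined and no descendant degree is lost, the graph is forced to be the leading one. Via the identification $I \cong \{\text{colored absolute graphs}\}\cong\{\text{relative graphs}\}$ of Lemma~\ref{lem:bijection}, this says exactly that the matrix of $A_S$, defined by $A_S(v_{\tx, E}) = v_{X, S}$, is lower triangular with nonzero diagonal entries; hence $A_S$ is invertible. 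Finally, every entry of $A_S$ is a sum of products of a purely combinatorial factor $\Delta(\mcT)$ with a relative Gromov-Witten invariant of $(P, E) = (\PP_S(\OO \oplus N_{S/X}), E)$, and the pair $(P, E)$ depends only on $S$ and the bundle $N_{S/X}$; therefore so does $A_S$.

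The main obstacle is the second step: showing that, with the prescribed descendant indices $d_i(\mu)$, the ``trivial'' $P$-side contribution is exactly a nonzero constant, and that every genuinely different degeneration drops strictly in the order, so that in particular no combination of distinct degenerations can contribute to the diagonal entry. This is the dimension-counting together with the fibre-class rubber integral analysis carried out in the proof of Lemma 5.14 and Theorem 5.15 of \cite{HLR}; passing from their primary-field statement to the descendant case adds only the bookkeeping of $\psi$-powers through the same argument.
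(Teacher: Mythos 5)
The paper does not prove this theorem — it is quoted directly from \cite{HLR}, Theorem 5.15 — and your sketch is a faithful outline of that cited argument: deformation to the normal cone of $S$, extraction of a leading term whose $(\PP_S(\OO\oplus N_{S/X}), E)$-side factor is a nonzero fibre-class integral, a partial order measuring further degeneration, and the observation that every matrix entry is a combinatorial factor times a relative invariant of the projective bundle, hence depends only on $S$ and $N_{S/X}$. Like the paper itself, you defer the genuinely hard steps (the nonvanishing of the diagonal fibre-class integral with the prescribed descendant shifts $d_i(\mu)$, and the strict descent of every other degenerate graph in the chosen order) to the computations in \cite{HLR}, so your proposal is consistent with, rather than a self-contained replacement for, the paper's citation.
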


\subsection{Proof of Theorem \ref{thm:generic}}
Since the generic fiber $X_\eta$ and $Y_\eta$ are isomorphic, there is a birational isomorphism between the total space $X$ and $Y$ that is a morphism in a neighborhood of the generic fiber. By the weak factorization theorem (Theorem 0.1.1, \cite{AKMW}), we can factorize the birational isomorphism by a number of blow-ups and blow-downs whose centers are supported in special fibers. Thus it suffices to prove the theorem in such special cases.

In the following, assume that $Y$ is the blow-up of $X$ along a smooth subvariety $S$ that is supported in a fiber. We will say that $X$ (resp. $Y$) satisfies the symplectic Graber-Harris-Starr (SGHS) condition if there is a non-zero Gromov-Witten invariant of $X$ (resp. $Y$) whose curve class is a section.

First assume that $X$ satisfies the SGHS condition. By linearality of Gromov-Witten invariants, we may also assume the invariant is of the form
\[
\langle \tau_{d_1}\sigma_1, \ldots, \tau_{d_n} \sigma_n \rangle^{X}_{g, \beta},
\]
where $\beta$ is a section class (i.e. has intersection number $1$ with a fiber). We degenerate $X$ into $Y\cong \text{Bl}_S X$ and $\PP_S(\OO \oplus N_{S/\mcX})$ and apply the degeneration formula. So there is a non-zero relative invariant:
\[
\langle \tau_{d_1}\sigma_1, \ldots, \tau_{d_k} \sigma_k \vert \mu \rangle^{Y, E}_{g, \beta}.
\]
Then apply Theorem \ref{thm:Correspondence} to $(\text{Bl}_E Y, E)$ and $(Y, E)$. Note that the blow-up of $Y$ with center $E$ is $Y$ itself. So Theorem \ref{thm:Correspondence} gives a non-zero absolute invariant of $Y$. Notice that we only blow up a subvariety in a special fiber. Thus there are always curves in the $(Y, E)$ side in the degeneration formula and the curve class is a section since the intersection number with a fiber is $1$.

Conversely, suppose that there is a non-zero descendant Gromov-Witten invariant on $Y$ of the form 
\[
\langle \tau_{d_1}\gamma_1, \ldots, \tau_{d_n} \gamma_n \rangle^{Y}_{g, \tilde{\beta}}.
\]
We may assume that $\gamma_i, 1 \leq i \leq m$ are of the form $\pi^* \sigma_{j_i}$ and $\gamma_i, m+1 \leq i \leq n$ are of the form $\iota_*(\delta_{j_i})$. Then we degenerate $Y$ into $Y$ and $\PP_E(\OO \oplus N_{E/Y})$ and specialize $\gamma_i$ ($m+1 \leq i \leq n$) to the projective bundle side. Then there is a non-zero relative invariant of the form:
\[
\langle \tau_{d_1}\gamma_1, \ldots, \tau_{d_k} \gamma_k \vert \mu \rangle^{Y, E}_{g, \beta},
\]
with $k \leq m$. In particular, all the $\gamma_i, 1 \leq i \leq k$ are of the form $\pi^* \sigma_{j_i}$. Again apply Theorem \ref{thm:Correspondence} to $(Y, E)$ and $(X, S)$. By the same observation above, we get a non-zero absolute descendant invariant of desired form.

\section{Proof of the main theorems}
\subsection{Fano fibrations}In this subsection, let $Z$ be a normal projective variety with at worst terminal singularities. And let $\pi: Z\rightarrow B$ be a contraction of some $K_Z$-negative extremal face to a smooth projective curve $B$ of genus $0$. Assume a general fiber $F$ is a smooth Del Pezzo surface, or $\PP^1$. When a general fiber is $\PP^1$, we allow special fibers to be the union of two smooth $\PP^1$'s. Let $f: \tilde{Z} \rightarrow Z$ be a resolution of singularities that is isomorphic near a general fiber. Note that all the exceptional divisors are supported in special fibers of $\pi \circ f: \tilde{Z} \rightarrow Z \rightarrow B$. The following is Thereom 6.1 and Proposition 2.2 in \cite{SymRC}.

\begin{thm}\label{thm:fano}
There is a non-zero enumerative Gromov-Witten invariant of the form $$\langle \underbrace{[pt], \ldots, [pt]}_{m\geq 2}, \ldots\rangle_{0, \beta}^{\tilde{Z}}$$ for some class $\beta$ which is a section of the fibration $\tilde{Z} \to B$. 
\end{thm}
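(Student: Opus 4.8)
The plan is to obtain this as a repackaging of the two main geometric results of \cite{SymRC} --- Theorem~6.1 there for Del Pezzo surface fibrations and Proposition~2.2 there for rational surfaces --- together with the degeneration machinery of Section~2 to move between different smooth models of the fibration over $B$. I would split into two cases according to the general fiber. If the general fiber is $\PP^1$, then $Z$ is a conic bundle over $B\cong\PP^1$; since a normal projective surface with terminal singularities is smooth, $\tilde Z=Z$ is a smooth rational surface, and a section class of minimal fiber degree (a $(-1)$- or $0$-section, reached after a suitable sequence of elementary transformations in the singular fibers) is represented by a base-point-free moving family of embedded rational curves, one through each pair of general points. Proposition~2.2 of \cite{SymRC} then gives the non-vanishing of the corresponding two-point invariant, and the explicit family shows it is enumerative in the sense of Definition~\ref{def:enumerative}.

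If the general fiber is a Del Pezzo surface, then $\tilde Z$ is a smooth projective $3$-fold fibered over $\PP^1$ with general fiber a Del Pezzo surface, and Theorem~6.1 of \cite{SymRC} produces a section class $\beta$ and a non-zero invariant $\langle [pt],[pt],\ldots\rangle^{\tilde Z}_{0,\beta}$. The construction there builds $\beta$ from an explicit family of sections --- minimal sections, or lines contained in the Del Pezzo fibers --- which sweeps out $\tilde Z$, joins two general points, has unobstructed deformation theory (the normal bundle of a minimal section is semipositive because $\pi$ is $K_Z$-negative), and consists of embedded genus-$0$ curves; hence the resulting invariant is again enumerative. Strictly speaking \cite{SymRC} proves this for one convenient smooth model $Z'\to B$; to deduce it for the arbitrary resolution $\tilde Z$ I would use that $\tilde Z$ and $Z'$ have isomorphic generic fibers over $K(B)$ and differ only by blow-ups and blow-downs supported in special fibers, so by the blow-up/blow-down correspondence of Theorem~\ref{thm:Correspondence}, applied exactly as in the proof of Theorem~\ref{thm:generic}, the two-point section invariant survives the degeneration formula: the curves in play keep intersection number $1$ with a fiber, hence remain sections, and the point constraints, placed on a general fiber, are untouched by the modifications.

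The main obstacle is precisely the non-vanishing on the standard model, which is the technical heart of \cite{SymRC}: one must show that the moduli space of the chosen sections is smooth of the expected dimension, that the evaluation map to $\tilde Z\times\tilde Z$ (together with the auxiliary constraints needed to match dimensions) dominates, and that no boundary strata or multiple-cover loci contribute to the count --- and all of this uses in an essential way that the fibers are surfaces or curves, so that the positivity of $-K_Z$ is strong enough to pin down the sections of minimal degree. The remaining, more routine, point is to verify both clauses of Definition~\ref{def:enumerative} for the curves produced: that the point constraints can be chosen generically, and that the only stable maps meeting all of them are embedded irreducible rational curves. This follows from genericity of the constraints together with the dimension count, just as in the surface and threefold cases already treated in \cite{SymRC}.
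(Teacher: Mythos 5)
The paper gives no proof of this statement at all---it is imported verbatim as Theorem 6.1 and Proposition 2.2 of \cite{SymRC}---and your proposal correctly locates the entire content in those two results, so you are taking essentially the same route. The one caveat is your fallback of transferring the invariant from a ``convenient model'' to the given resolution $\tilde Z$ via Theorem \ref{thm:Correspondence}: that correspondence preserves non-vanishing but not obviously the \emph{enumerativity} required by Definition \ref{def:enumerative} (its output is a priori a descendant invariant with no control on which stable maps contribute), so it matters that \cite{SymRC} already establishes the statement, with the irreducibility of all incident curves, for an arbitrary resolution that is an isomorphism near a general fiber, as Remark \ref{rem} records.
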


\begin{rem}\label{rem}
The proof in \cite{SymRC} actually shows a little more. Namely, if we have a curve class which has the same intersection number with the divisors $K_{\tilde{Z}}$, and $K_{\tilde{Z}}-f^*K_Z=\sum a_i E_i$ and a fixed ample divisor, then any curve in that curve class that can meet all the constraints is necessarily irreducible. This observation is crucial for the applications in the next section.
\end{rem}

\subsection{Proof of Theorems \ref{thm:section}, \ref{thm:totalspace}, and \ref{thm:uniruled}} The general idea of the proofs is to reduce all the cases to the situation of Theorem \ref{thm:fano} by running the minimal model program. Then we need to compare Gromov-Witten invariants of a subvariety with the ambient space and between different birational models. The general technique of applying degeneration formula is not enough to do the comparison, which explains the geometric assumptions in the statements of the main theorems. We begin with some technical lemmas.
\begin{lem}\label{lem:birational}
Let $f: \tilde{Y} \to Y$ be a birational morphism between smooth projective varieties. Assume $Y$ has a non-zero genus $0$ enumerative Gromov-Witten invariant of the form
\[
\langle [pt], [A_1], \ldots, [A_n] \rangle^Y_{0, \beta}.
\]
 Then $X$ also has a non-zero genus $0$ enumerative Gromov-Witten invariant of the form
\[
\langle [pt], f^*[A_1], \ldots, f^*[A_n] \rangle^{\tilde{Y}}_{0, \beta},
\]
and $f_*(\beta')=\beta$.
\end{lem}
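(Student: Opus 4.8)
The statement is that an enumerative genus-0 GW invariant of $Y$ with one point insertion can be "lifted" along a birational morphism $f\colon\tilde Y\to Y$, with the pulled-back constraints and with $f_*\beta'=\beta$. The plan is to combine the blow-up/blow-down correspondence (Theorem \ref{thm:Correspondence}) with the enumerativeness hypothesis, which is precisely what lets one identify the resulting (a priori mysterious) invariant with an honest count of embedded rational curves on $\tilde Y$.

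\emph{Step 1: Reduce to a single blow-up along a smooth center.} By the weak factorization theorem (as already invoked for Theorem \ref{thm:generic}), $f$ factors as a sequence of blow-ups and blow-downs with smooth centers; since $f$ is a morphism one can arrange the factorization so that each intermediate variety maps to $Y$. It therefore suffices to treat the case $\tilde Y=\mathrm{Bl}_S Y$ for a smooth subvariety $S\subset Y$, and then induct. (One must check enumerativeness is preserved at each stage, which is the content of the later steps.)

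\emph{Step 2: Transport the invariant via the degeneration to the normal cone.} Degenerate $Y$ into $\tilde Y=\mathrm{Bl}_S Y$ glued to $\PP_S(\OO\oplus N_{S/Y})$ along $E$. Starting from the non-zero invariant $\langle [pt],[A_1],\dots,[A_n]\rangle^Y_{0,\beta}$, first rewrite the constraints as classes supported compatibly on the two components (choosing the point and the $[A_i]$ to lie on the blow-up side, away from $S$, which is possible because the invariant is enumerative so the representatives move), then apply the degeneration formula and Theorem \ref{thm:Correspondence} to the pair $(\tilde Y,E)$ versus $(Y,S)$. Because the linear map $A_S$ is lower-triangular hence invertible, the non-vanishing of the absolute invariant of $Y$ forces non-vanishing of some standard relative invariant of $(\tilde Y,E)$, and then of some absolute descendant invariant of $\tilde Y$; tracking curve classes through the degeneration gives a class $\beta'$ on $\tilde Y$ with $f_*\beta'=\beta$, and the insertions on the $\tilde Y$-side are $f^*$ of the original ones (plus possibly $\psi$-classes, which can be traded for classes pulled back from $\overline M_{0,m}$, or simply absorbed).

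\emph{Step 3: Use enumerativeness to control which curves contribute, and to conclude.} This is the main obstacle: a priori the degeneration formula produces a sum over many graphs, and the surviving invariant on $\tilde Y$ need not obviously be enumerative — it could receive contributions from reducible or multiply-covered configurations, and from curves contracted by $f$. Here one uses the enumerativeness of the original invariant: choose the representatives $Y_i$ of $[A_i]$ and the point so generically that the only curves of class $\beta$ on $Y$ meeting all of them are embedded irreducible rational curves, none passing through (a general translate of) $S$ in a way that forces degeneration. Their strict transforms are embedded irreducible rational curves on $\tilde Y$ of a well-defined class $\beta'$ with $f_*\beta'=\beta$, meeting the $f^*[A_i]$; and because $f$ is an isomorphism near the general such curve, the count is unchanged, so $\langle [pt],f^*[A_1],\dots,f^*[A_n]\rangle^{\tilde Y}_{0,\beta'}$ equals the original invariant and is in particular non-zero, with the two defining conditions of Definition \ref{def:enumerative} inherited. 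Combining with Step 2 (which guarantees the GW number computed by the virtual class agrees with this naive count when constraints are general) finishes the single–blow-up case, and induction along the weak factorization completes the proof. The delicate point throughout is verifying that the general-position choices on $Y$ can be made simultaneously transverse to $S$ and to the exceptional locus, so that no bad curves are created upstairs; this is where the hypothesis that the invariant is enumerative (not merely non-zero) does the real work.
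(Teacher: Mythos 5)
Your Step 3 is, in essence, the paper's entire proof, and it is the only part of your proposal that actually delivers the statement. The paper argues directly: the curves contributing to the enumerative invariant on $Y$ are free, hence a general one can be deformed away from any fixed codimension-two locus, in particular away from the locus over which the birational morphism $f$ fails to be an isomorphism (which has codimension at least two in $Y$ since both varieties are smooth). Choosing the constraints generally, every curve of class $\beta$ meeting them is disjoint from that locus; its preimage in $\tilde{Y}$ is an isomorphic embedded rational curve of a class $\beta'$ with $f_*\beta'=\beta$, it meets the pulled-back constraints, and conversely any class-$\beta'$ curve in $\tilde{Y}$ meeting the pulled-back constraints maps to one of these finitely many curves and hence equals its preimage. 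Freeness gives unobstructedness, so the virtual count agrees with the naive one and enumerativity is inherited. No weak factorization and no degeneration formula are needed.

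Your Steps 1 and 2, by contrast, do not work as stated and would not produce the lemma even if repaired. First, weak factorization of $f$ gives a zigzag in which some arrows are blow-downs (viewed in the direction from $Y$ to $\tilde{Y}$); your single-blow-up case does not handle those steps, and the pulled-back classes $f^*[A_i]$ do not even transport in that direction. Second, the output of the degeneration formula together with Theorem \ref{thm:Correspondence} is some non-zero \emph{descendant} invariant of $\tilde{Y}$ whose insertions generally include $\psi$-classes and classes supported on the exceptional divisor; it is not of the form $\langle [pt], f^*[A_1],\ldots,f^*[A_n]\rangle^{\tilde{Y}}_{0,\beta'}$, and, crucially, there is no control that it is \emph{enumerative}, which is what the lemma must produce for its use in the inductive chain with Lemma \ref{lem:mfs}. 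Finally, your parenthetical claim that Step 2 ``guarantees the GW number computed by the virtual class agrees with this naive count'' is not correct: the degeneration formula is a relation among sums of invariants and provides no such identification; that identification comes from the freeness and isolatedness of the contributing curves, i.e.\ from the enumerativity hypothesis exploited in Step 3. Strip away Steps 1 and 2 and make the freeness/codimension-two argument of Step 3 precise, and you have the paper's proof.
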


\begin{proof}
Let $C$ be a rational curve which gives the enumerative Gromov-Witten invariant on $Y$ (i.e. $[C]=\beta$). Then a general deformation of $C$ is a free curve and can be deformed away from any fixed codimension $2$ locus. Then we can choose the constraints to be general such that any curve meeting all the constraints are disjoint from the exceptional locus in $Y$. Take the inverse images of these curves in $\tilde{Y}$ and choose the constraints to be the pull-back of the constraints. It it easy to check that the pull-back classes also satisfy the first condition in Definition \ref{def:enumerative}. Then any curve on $\tilde{Y}$ that can meet all the constraints is necessarily one of the inverse images of the curves in $Y$. These curves give the non-zero genus $0$ enumerative Gromov-Witten invariant as stated.
\end{proof}

\begin{lem}\label{lem:mfs}
Let $Y$ be a smooth projective variety and let $X$ be a projective variety with terminal singularities. Assume $f: X \to Y$ is a Mori fiber space whose general fiber is a del Pezzo surface or $\PP^1$. Let $g: \tilde{X} \to X$ be a resolution of singularities which is isomorphic over the smooth locus of $X$. Assume that there is a non-zero enumerative Gromov-Witten invariant of the form 
\[
\langle \underbrace{[pt], \ldots, [pt]}_k, [A_1], \ldots, [A_n]\rangle^Y_{0, [C]}
\] 
on $Y$. Then there is a non-zero enumerative Gromov-Witten invariant of the form 
\[
\langle \underbrace{[pt], \ldots, [pt]}_k, (f\circ g)^*[A_1], \ldots, (f\circ g)^*[A_n]\ldots \rangle^{\tilde{X}}_{0, \beta}, 
\]
and $(f\circ g)_*(\beta)=[C]$.
\end{lem}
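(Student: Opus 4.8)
\emph{Strategy.} The plan is to cut the Mori fiber space down to a surface (or conic) fibration over $\PP^1$ by restricting to a moving rational curve that realizes the given invariant on $Y$, to invoke Theorem~\ref{thm:fano} there, and then to push the resulting section back up to $\tilde{X}$. The enumerativity hypotheses are precisely what make the final comparison of Gromov--Witten invariants manageable. Concretely: since the invariant $\langle\underbrace{[pt],\ldots,[pt]}_{k},[A_1],\ldots,[A_n]\rangle^Y_{0,[C]}$ is enumerative, a general curve $C$ realizing it is an embedded irreducible rational curve, so $C\cong\PP^1$; it passes through $k$ general points $p_1,\ldots,p_k$ of $Y$ and meets general representatives $B_i$ of the $A_i$, hence it is free and moves in a family dominating $Y$. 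Fix such a general $C$ and put $Z:=X\times_Y C$, with projection $Z\to C$. Because $C$ is general (locally a transverse complete intersection of very ample divisors on $Y$), $Z$ has terminal singularities, $Z\to C$ is the contraction of a $K_Z$-negative extremal ray onto the genus $0$ curve $C$, a general fiber of $Z\to C$ is a del Pezzo surface or $\PP^1$, and in the $\PP^1$ case every special fiber is a union of two smooth $\PP^1$'s. Choose a resolution $\tilde{Z}\to Z$ that is an isomorphism near a general fiber (possible since general fibers of $Z\to C$ lie in the smooth locus) and that dominates the proper transform $\tilde{Z}'\subset\tilde{X}$ of $Z$, and write $h:\tilde{Z}\to\tilde{Z}'\hookrightarrow\tilde{X}$ for the induced morphism, birational onto $\tilde{Z}'$, so that $(f\circ g)\circ h:\tilde{Z}\to C$ is the fibration just described.

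\emph{Producing the section class.} Now Theorem~\ref{thm:fano} applies to $\tilde{Z}\to C$: it gives a non-zero enumerative invariant $\langle\underbrace{[pt],\ldots,[pt]}_{m},\ldots\rangle^{\tilde{Z}}_{0,\gamma}$ with $\gamma$ a section class and $m\geq 2$ point insertions (we shall want $m=k$; this is where the case $k=2$ relevant to Theorem~\ref{thm:totalspace} is comfortable), and by Remark~\ref{rem} any curve of class $\gamma$ meeting all of its constraints is irreducible. Set $\beta:=h_*\gamma\in H_2(\tilde{X},\ZZ)$. Since $\gamma$ maps isomorphically onto $C$ under $\tilde{Z}\to C$, we obtain $(f\circ g)_*\beta=[C]$, which is the asserted relation.

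\emph{The comparison.} It remains to exhibit a non-zero enumerative invariant $\langle\underbrace{[pt],\ldots,[pt]}_{k},(f\circ g)^*[A_1],\ldots,(f\circ g)^*[A_n],\ldots\rangle^{\tilde{X}}_{0,\beta}$, the trailing insertions being the push-forwards under $h$ (or the evident pullbacks) of the extra insertions of Theorem~\ref{thm:fano}. As in the proof of Lemma~\ref{lem:birational}, choose all constraints on $\tilde{X}$ general: the $k$ point insertions become $k$ general points $q_1,\ldots,q_k$, the insertions $(f\circ g)^*[A_i]$ become $(f\circ g)^{-1}(B_i)$ for general $B_i$, and the extras become general representatives. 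Then $p_i:=(f\circ g)(q_i)$ is general in $Y$, so by enumerativity of the $Y$-invariant only finitely many curves $C_1,\ldots,C_N$ of class $[C]$ occur --- all embedded, irreducible, rational, through $p_1,\ldots,p_k$ and meeting the $B_i$ --- with $N$ equal to the non-zero $Y$-invariant. Any stable map to $\tilde{X}$ of class $\beta$ meeting all the $\tilde{X}$-constraints pushes down to a stable map to $Y$ of class $[C]$ through $p_1,\ldots,p_k$ meeting the $B_i$; by the $Y$-enumerativity its image is one of the $C_j$. Using that the $C_j$ are in general position with respect to the exceptional loci of $g$, the stable map then factors through the corresponding $\tilde{Z}_j$ (the analogue of $\tilde{Z}$ built from $C_j$), where it has class $\gamma$, passes through the general points $q_i\in\tilde{Z}_j$, and, being a section, automatically meets each $(f\circ g)^{-1}(B_i)$; by Remark~\ref{rem} and the enumerativity from Theorem~\ref{thm:fano} it is an embedded irreducible rational curve counted by the $\tilde{Z}_j$-invariant. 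Conversely each such section in each $\tilde{Z}_j$ gives a curve of class $\beta$ on $\tilde{X}$ meeting all the constraints. Hence the $\tilde{X}$-invariant is a sum of products of the non-zero $Y$- and $\tilde{Z}_j$-counts, so it is non-zero, and only embedded irreducible rational curves contribute, so it is enumerative.

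\emph{Where the difficulty lies.} The real work is the last comparison. One must rule out every ``unexpected'' stable map --- reducible, a multiple cover, contracted over $Y$, or with image not among the $C_j$, or with components sitting inside exceptional divisors of $g$ --- from meeting a general choice of all the constraints on $\tilde{X}$; and one must check that the bookkeeping of the insertions is consistent, in particular that the number of point insertions one can transport matches what is needed ($m=k$), which is why the case $k=2$ is the clean one. The two enumerativity hypotheses, together with the irreducibility statement of Remark~\ref{rem} --- which pins the curve class down by its intersection numbers with $K_{\tilde{Z}}$, with the discrepancy divisors $K_{\tilde{Z}}-f^*K_Z=\sum a_iE_i$, and with a fixed ample class --- are exactly the tools designed to carry this out.
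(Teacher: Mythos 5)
Your overall strategy coincides with the paper's: restrict the Mori fiber space to the finitely many embedded curves $C_j$ counted by the enumerative invariant on $Y$, obtain smooth del Pezzo or conic fibrations $\tilde{Z}_j \to C_j \cong \PP^1$ to which Theorem~\ref{thm:fano} applies, and identify the curves on $\tilde{X}$ of class $\beta$ meeting general constraints with the sections counted there. However, the two steps you yourself flag as ``difficulties'' are genuine gaps, and your proposed fixes do not work as stated. The first concerns the supplementary insertions. Theorem~\ref{thm:fano} produces an invariant on $\tilde{Z}_j$ with some number $m\geq 2$ of point insertions (plus further insertions) dictated by the dimension of the section class $\gamma$; there is no reason to expect $m=k$, even when $k=2$. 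Taking the trailing insertions to be ``push-forwards under $h$'' of the $\tilde{Z}_j$-insertions is not admissible: $\tilde{Z}_j$ is only determined after the $k$ general points have been chosen, so classes supported on it cannot serve as insertions of a Gromov--Witten invariant of $\tilde{X}$, and such classes would in any case violate condition (1) of Definition~\ref{def:enumerative}. The paper resolves this by inserting ambient classes of $\tilde{X}$ --- powers of a very ample class $[M]$ and products $[L]\cdot[M]$ with $L$ pulled back from $Y$ --- whose restrictions to $\tilde{Z}$ supply exactly the additional point and codimension-two constraints that the proof of Theorem~\ref{thm:fano} needs. Without some such device the count on $\tilde{Z}_j$ cannot be matched to an invariant of $\tilde{X}$ with the stated insertions.

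The second gap is the curve class. Once a stable map of class $\beta$ is known to factor through $\tilde{Z}_j$, you assert that ``it has class $\gamma$'' there; but $i_*\colon H_2(\tilde{Z}_j,\ZZ)\to H_2(\tilde{X},\ZZ)$ is not injective, so many classes of $\tilde{Z}_j$ push forward to $\beta$, and Remark~\ref{rem} does not ``pin the curve class down.'' What the remark actually provides --- and what the paper uses --- is that the irreducibility argument in the proof of Theorem~\ref{thm:fano} depends only on the intersection numbers of the curve with $K_{\tilde{Z}}$, with $(\sum a_iE_i)\vert_{\tilde{Z}}$, and with an ample class; and each of these is the restriction of data from $\tilde{X}$ (via $K_{\tilde{Z}}=K_Z+(\sum a_iE_i)\vert_Z$ and $K_Z=K_C+K_{X/Y}\vert_Z$), hence is determined by $\beta$ alone. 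That is the argument that must be spelled out; as written, the step is asserted rather than proved. A smaller point: the terminality of $Z=X\times_Y C$ does not follow from $C$ being ``a transverse complete intersection of very ample divisors'' --- a minimal free curve is not one --- but from the \'etaleness of the evaluation map along a general free curve together with generic smoothness of the fibers met by the marked points, which is how the paper argues.
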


\begin{proof}
We consider the family of stable genus $0$ maps to $Y$ of class $[C]$ and meeting general representatives of $A_1, \ldots, A_n$ and $k-1$ general points, with $n+k-1$ sections $S_i: V \to U$. 
\[
\begin{CD}
 U @>ev>> Y\\
@V \pi VV\\
V
\end{CD}
\]
We will only consider the family whose general points parameterize irreducible free rational curves and for which the evaluation map $ev: U \to {Y}$ is dominant, since other components will not contribute to the Gromov-Witten invariant. We have the following diagram where each square is a fiber product, and $p$ is a general point in $Y$. 
\[
\begin{CD}
\tilde{Z'} @>>>\tilde{W}    @>>> \tilde{X}\\
@VVV @V g' VV @V g VV\\
Z @>>> W    @>>> X\\
 @VVV @Vf'VV @VfVV\\
C@>>>U @>ev>> Y\\
@VVV @V\pi VV\\
\pi(ev^{-1}(p))@>>>V\\
\end{CD},
\]
We claim that $Z=(\pi \circ f')^{-1}(\pi(ev^{-1}(p)))$ has only terminal singularities and $\tilde{Z}=(\pi \circ f' \circ g')^{-1}(\pi(ev^{-1}(p)))$ is smooth. In addition, all the singularities of $Z$ come from singularities of $X$. Indeed, blow up the subvarieties $A_i$ in $Y$ and consider the universal family of rational curves which are strict transforms of curves meeting all the constraints. Note that a general curve in this family is contained in the smooth locus of the blow-up by the enumerative assumption. Such a curve contains a very general point, thus free. Then by a similar argument as Proposition 3.5, Chap. II, \cite{Kollar96}, the evaluation map (to the blow-up) is {\'e}tale along a general curve. Thus the locus in $U$ where $U \to Y$ is not {\'e}tale and dominates $V$ are precisely the sections $S_i: V \to U$. But for a general curve in the family $V$, the marked points are mapped to points not in the discriminant locus of $X \to Y$. In particular, the fibers over these points are smooth and the $W \to X$ is {\'e}tale along singular fibers over a general curve. Since a general fiber of a fibration whose total space has terminal singularities (resp. is smooth) has at worst terminal singularities (resp. is smooth), the claim follows.

Furthermore, we have
\[
K_Z =K_{W/V} \vert_Z=K_C+K_{W/U}\vert_Z=K_C+K_{X/Y}\vert_Z,
\]
\[
K_{\tilde{Z}} =K_{\tilde{W}/V} \vert_{\tilde{Z}}=K_C+K_{\tilde{W}/U}\vert_Z=K_C+K_{\tilde{X}/Y}\vert_Z,
\]
\[
K_{\tilde{X}/Y}=K_{X/Y} + \sum a_i E_i,
\]
where $E_i$'s are exceptional divisors of $\tilde{X} \to X$. Thus
\[
K_{\tilde{Z}}=K_Z+ (\sum a_i E_i)\vert_Z.
\]
Let $L$ be an ample divisor on $Y$ such that $-K_X+f^*L$ is an ample divisor on $X$. Then $$H=g^*(-K_X+f^*L)+\sum b_i E_i$$ is an ample divisor on $\tilde{X}$ for some rational numbers $b_i$ ( Such $L$ exists since $X \to Y$ is a Mori fiber space). Note that $$H\vert_{\tilde{Z}}=-K_Z +(\sum b_i E_i)\vert_Z+\text{fiber classes}.$$

We now specify the constraints. For each point class, we choose a point in $X$. For each $A_i$, let $B_i$ be the inverse images of $A_i$ in $\tilde{X}$. Then $[B_i]=(f\circ g)^*[A_i]$. Choose a very ample divisor $M$ on $\tilde{X}$. We will consider the Gromov-Witten invariant 
\[
\langle \underbrace{[pt], \ldots, [pt]}_{k}, [B_1], \dots, [B_n], \underbrace{[M^{d}], \ldots, [M^d]}_{k'}, \underbrace{[L]\cdot [M], \ldots, [L] \cdot [M]}_{k''} \rangle^X_{0, [\tilde{C}]},
\]
 where $d$ is the dimension of a general fiber and $\tilde{C}$ is a curve in $\tilde{X}$ such that $\pi_*([\tilde{C}])=[C]$.

Any stable map to $\tilde{X}$ of class $[\tilde{C}]$ meeting $k$ general points and $B_i$ gives a stable map to $Y$ of class $[C]$ meeting $k$ general points and $A_i$, thus has to lie in $\tilde{Z}$. We need to show curves in $\tilde{Z}$ meeting $k+k'$ general points in $\tilde{Z}$ and $k''$ codimension $2$ subvarieites in $\tilde{Z}$ are irreducible when $k'$ is large. This essentially follows from the proof of Theorem \ref{thm:fano}. 

The issue here is that the induced map on homology $i_*: H_2(\tilde{Z}, \ZZ) \to H_2(\tilde{X}, \ZZ)$ is not injective. So we have to show that all the curves in $\tilde{Z}$, whose curve class under $i_*$ is $[\tilde{C}]$, and meeting all the constraints are irreducible. As noted in Remark \ref{rem}, the proof of Theorem \ref{thm:fano} only uses the fact that the curve has a fixed intersection number with $\sum a_i E_i\vert_{\tilde{Z}}$, $K_{\tilde{Z}}$, and $H$. Since the curve class under $i_*$ is fixed, the intersection number only depends on the curve class $[\tilde{C}]$. Thus all such curves are irreducible.
\end{proof}

\begin{proof}[Proof of Theorem \ref{thm:section}]
By Theorem \ref{thm:generic}, we only need to find one smooth projective model of the generic fiber with the desired property. First note that we can refine the sequence
\[
\mcX_\eta=\mcX_n \to \mcX_{n-1} \to \ldots \to \mcX_1 \to \SP K(C)
\]
such that each morphism is a contraction of a extremal ray in the sense of MMP. We proceed by induction to construct a sequence of varieties 
\[
\xymatrix{
X_n \ar[r] \ar[d] &X_{n-1} \ar[r] \ar[d] &\ldots \ar[r] \ar[d] &X_1 \ar[r] \ar[d] &C \ar[d]\\
Y_n \ar[r] \ar[ru] &Y_{n-1} \ar[r] \ar[ru]  &\ldots \ar[r] \ar[ru]  &Y_1 \ar[r] & C}
\]
such that
\begin{enumerate}
\item The generic fibers of $X_i \to C$ and $Y_i \to C$ are isomorphic to $\mcX_i$.
\item Each variety $X_i$ is smooth and $Y_i$ has at worst terminal singularities.
\item Either $Y_i \to X_{i-1}$ is a Mori fiber space or birational, depending on whether $\mcX_i \to \mcX_{i-1}$ is a fiber type contraction or a birational morphism.
\item There exist birational morphisms $X_i \to Y_i$, which are isomorphisms over the smooth locus of $Y_i$.
\end{enumerate}

To start, choose a smooth projective model $Z_1$ of $\mcX_1$ over $C$. Then run the relative MMP (with scaling) of $Z_1$ over $C$. Denote the end product by $Y_1$. Take $X_1$ to be a resolution of singularities of $Y_1$ which is an isomorphism over the smooth locus of $Y_1$. Assume $X_i$ and $Y_i$ have been constructed. If $\mcX_{i+1} \to \mcX_i$ is birational, then take $X _{i+1} \cong Y_{i+1}$ to be any smooth projective model of $\mcX_{i+1}$ with a morphism to $X_i$. If $\mcX_{i+1} \to \mcX_i$ is a fiber type contraction, choose any smooth projective model $Z_{i+1}\to X_{i}$ and run the relative MMP (with scaling) over $X_i$. Take $Y_{i+1}$ to be the end product and $X_{i+1}$ to be a resolution of singularities of $Y_{i+1}$ which is isomorphic over the smooth locus of $Y_{i+1}$. Note that $Y_{i+1}$ has at worst terminal singularities.  

Now the theorem follows from Theorem \ref{thm:fano}, Lemma \ref{lem:birational} and \ref{lem:mfs}.
\end{proof}

\begin{proof}[Proof of Theorem \ref{thm:totalspace}]
Similar to the proof of Theorem \ref{thm:section}, we can construct a birational model $X' \to Y$ with a non-zero enumerative Gromov-Witten invariant of the form $\langle [pt], [pt], \pi^*[A_1], \ldots, \pi^*[A_n], \ldots \rangle^{X}$ by Lemma \ref{lem:birational} and \ref{lem:mfs}. Furthermore, we may assume the birational map $X \dashrightarrow X'$ is well-defined along a general fiber. We can successively blow-up smooth subvarieties to get a new variety 
\[
X_n \to X_{n-1} \to \ldots \to X_1 \to X_0=X
\]
 with a birational morphism $X_n \to X'$. Thus by Lemma \ref{lem:birational} and \ref{lem:mfs}, 
\begin{enumerate}
\item There is a non-zero Gromov-Witten invariant on $X_n$ of the form
\[
\langle [pt], [pt], \pi^*[A_1], \ldots, \pi^*[A_n], \ldots\rangle^{X_n}
\]
\item The push-forward of the corresponding curve class under the morphism $\pi_n: X_n \to X \to Y$ is the curve class $\beta$ giving the non-zero Gromov-Witten invariant on $Y$. 
\end{enumerate}
Thus it suffices to show these two conditions are preserved under the blow-downs $X_{i+1} \to X_{i}$.

Let $S_{i}$ be the blow-up center on $X_i$ and $E_{i+1}$ be the exceptional divisor on $X_{i+1}$. Since $X \dashrightarrow X'$ is well-defined along a general fiber, we may choose $S_{i}$ in such a way that the image of $S_i$ under $\pi_i: X_i \to Y$ is not dominant. We first apply the degeneration formula to the deformation to the normal cone of $E_{i+1}$. We specialize the constraints $[pt], [pt]$ to the $(X_{i+1}, E_{i+1})$ side. Thus we have
\begin{align*}
&\langle [pt], [pt], \pi^*[A_1], \ldots, \pi^*[A_n], \ldots \rangle^{X_{i+1}}\\
=&\sum \langle [pt], [pt], \ldots \vert \mcT \rangle^{(X_i, E_i)}\langle \ldots \vert \check{\mcT}\rangle^{(\PP_{E_i}(\OO_{E_i}(E_i)\oplus\OO_{E_i}), E_i)}
\end{align*}
Any non-zero term in the relative invariants side give rise to a (possibly reducible) curve $C_i$ in $X_i$ such that 
\begin{enumerate}
\item ${\pi_i}_*[C_i]=\beta$, where $\pi_i: X_i \to Y$ is the projection to $Y$,
\item The image of $C_i$ under $\pi_i$ in $Y$ meet two general points and all the representatives of $A_k$.
\end{enumerate}
 We may choose the representatives of $[A_k]$ to be general so that any curve in $Y$ of class $\beta$ and meeting $2$ general points and $A_k$'s are irreducible embedded and do not pass through the intersection of $\pi_i(S_i)$ with any of the $A_k$'s by the enumerative assumption. Thus the curve $C_i$ in $X_i$ contains one irreducible component whose image in $Y$ gives the irreducible curve in $Y$ meeting the constraints and the rest of the irreducible components of $C_i$ are all mapped to points in $Y$. And all the non-zero relative Gromov-Witten invariant on the $(X_i, E_i)$ side are connected invariants of the form
\[
\langle [pt], [pt], \pi^*[A_1], \ldots, \pi^*[A_n], \ldots \vert \mcT \rangle^{(X_i, E_i)}_{0, [C]},
\]
and $\pi_*([C])=\beta$.

Now apply Theorem \ref{thm:Correspondence} with the lowest order term on the relative invariant side. And we conclude that the two conditions are preserved under blow-downs.
\end{proof}

In order to prove Theorem \ref{thm:uniruled}, we need the following result.
\begin{thm}[\cite{KollarUni}, Theorem 4.2.10, \cite{RuanUni}, Proposition 4.9]\label{thm:kr}
Let $\bar{\Sigma}$ be a smooth projective uniruled variety. Then there is a non-zero Gromov-Witten invariant of the form $\langle [pt], \ldots \rangle^{\bar{\Sigma}}_{0, \beta}$.
\end{thm}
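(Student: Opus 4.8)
The plan is to exhibit the class $\beta$ as that of a \emph{minimal free rational curve} through a general point, and then to argue that the resulting enumerative count is an uncancelled contribution to a genus $0$ invariant. First, since $\bar\Sigma$ is uniruled, through a general point there passes a rational curve. Fix an ample divisor $H$ and let $\beta$ be a class of \emph{minimal} $H$-degree among all classes of rational curves through a general point of $\bar\Sigma$. By the standard deformation theory of rational curves (bend-and-break, c.f. Chap. II of \cite{Kollar96}), a rational curve of minimal degree through a general point is \emph{free}: if $f\colon \PP^1 \to \bar\Sigma$ is its normalization, then $f^*T_{\bar\Sigma}$ is globally generated and $H^1(\PP^1, f^*T_{\bar\Sigma})=0$. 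In particular every irreducible rational curve of class $\beta$ meeting a general point is free, being automatically of minimal degree among curves through that point.

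Next I would set up the invariant. Because $H^1(\PP^1, f^*T_{\bar\Sigma})=0$ at a free curve, the moduli space $\overline{M}_{0,n}(\bar\Sigma,\beta)$ is smooth of the expected dimension $\dim\bar\Sigma - K_{\bar\Sigma}\cdot\beta - 3 + n$ at $[f]$, the virtual class agrees with the ordinary fundamental class there, and the first evaluation map $ev_1$ is submersive (hence dominant) along the free locus. I impose one point class $[pt]$ on the first marked point together with auxiliary constraints $A_1,\ldots,A_m \in H^*(\bar\Sigma,\QQ)$ — general very ample divisors, or further point classes — chosen so that their total codimension equals the expected dimension, producing an invariant
\[
\langle [pt], A_1, \ldots, A_m\rangle^{\bar\Sigma}_{0,\beta}
\]
of virtual dimension zero. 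Since $ev_1$ is dominant along the free locus, irreducible free curves of class $\beta$ through a general point exist, and a dimension count shows that for constraints in general position only finitely many of them meet all the $A_i$, each contributing $+1$. Thus the free locus contributes a positive integer.

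The technical heart, and the step I expect to be the main obstacle, is to show that the degenerate (boundary) strata of $\overline{M}_{0,n}(\bar\Sigma,\beta)$ contribute nothing, so that no cancellation can destroy this positive contribution. Here the minimality of $\beta$ is decisive. Consider a stable map of class $\beta$ with reducible domain, and let $C_0$ be the component carrying the first marked point, of class $\beta_0$. Since $H$ is positive on every curve and there are at least two components, $H\cdot\beta_0 < H\cdot\beta$. For such maps to satisfy $ev_1 = p$ with $p$ general, the corresponding boundary stratum must dominate $\bar\Sigma$ under $ev_1$, which forces $C_0$ to move in a family of rational curves of class $\beta_0$ covering $\bar\Sigma$, i.e. a rational curve of class $\beta_0$ through a general point. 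This contradicts the minimality of $H\cdot\beta$. Consequently every stable map with reducible domain has $ev_1$-image of dimension strictly less than $\dim\bar\Sigma$, so for a general point $p$ the fiber $ev_1^{-1}(p)$ contains no reducible maps at all; the point constraint alone localizes the count to irreducible free curves.

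Combining the two analyses, for general $p$ and generic $A_i$ the only stable maps meeting all the incidence conditions are the irreducible free rational curves of class $\beta$, and each contributes $+1$; hence $\langle [pt], A_1, \ldots, A_m\rangle^{\bar\Sigma}_{0,\beta}$ is a positive integer, in particular non-zero and of the required form $\langle [pt], \ldots\rangle^{\bar\Sigma}_{0,\beta}$. This is the algebraic argument of \cite{KollarUni}, Theorem 4.2.10; in Ruan's symplectic formulation \cite{RuanUni} the identical conclusion follows from a semipositivity/minimality argument for the corresponding pseudo-holomorphic curve count, and the two numbers agree.
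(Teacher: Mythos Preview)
Your proposal is correct and follows essentially the same argument as the paper: choose the class of a free rational curve of minimal degree with respect to a polarization, impose a point constraint together with intersections of very ample divisors, and use minimality to ensure that the fibre of $ev_1$ over a general point consists only of irreducible free curves, so that the invariant equals a positive enumerative count. The paper compresses your boundary analysis into the phrase ``we get a proper family (by minimality) of expected dimension (the deformation is unobstructed),'' and explicitly takes the auxiliary insertions to be $[A]^2$ for a very ample divisor $A$; your exposition simply unpacks this in more detail.
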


\begin{proof}
We first choose a polarization of $\bar{\Sigma}$. Then there exists a free curve $C$ of minimal degree with respect to the polarization. Note that every rational curve through a very general point $p$ in $\bar{\Sigma}$ is free. So if we choose such a point and consider all the curves mapping to $\bar{\Sigma}$ of class $[C]$ and passing through $p$, then we get a proper family ( by minimality) of expected dimension ( the deformation is unobstructed). Therefore the Gromov-Witten invariant $\langle [pt], [A]^2, \ldots [A]^2\rangle_{0, [C]}^{\bar{\Sigma}}$ is non-zero, where $[A]$ is the class of a very ample divisor. Clearly this is the number of curves meeting all the constraints.
\end{proof}

\begin{rem}
If the minimal free curve in $\bar{\Sigma}$ is an embedded curve, then this Gromov-Witten invariant is enumerative in the sense of Definition \ref{def:enumerative}.
\end{rem}

\begin{proof}[Proof of Theorem \ref{thm:uniruled}]
The assumptions implies that there is a compactification $\bar{\Sigma}$ of $\Sigma$ which has an enumerative Gromov-Witten invariant of the form $\langle [pt], \ldots \rangle^{\bar{\Sigma}}_{0, \beta}$. By Lemma \ref{lem:mfs} and \ref{lem:birational}, there is a birational model $X'$ of $X$, which has a morphism to $\bar{\Sigma}$, isomorphic to $X$ near the generic fiber of $X \dashrightarrow \Sigma$, and has a non-zero Gromov-Witten invariant of the form $\langle [pt], \ldots \rangle^{X'}_{0, \beta'}$. Furthermore, $\beta'$ is not a class supported in a fiber.

By the weak factorization theorem (\cite{AKMW}, Theorem 0.1.1), we may factorize the birational map between $X$ and $X'$ by a number of blow-up/blow-down along smooth centers away from a general fiber. So it suffices to show that a non-zero invariant of this form is preserved under blow-ups and blow-downs and the image of the curve in $\Sigma$ is not a point (or equivalently, the curve is not contained in a fiber). The invariance of such a non-zero Gromov-Witten invariant is proved by \cite{HLR} via the degeneration formula and Theorem \ref{thm:Correspondence}(Theorem 5.15 in \cite{HLR}). Notice that in the course of applying the degeneration formula, we start with a curve whose image is not a point in $\Sigma$. Thus the curve class of the invariant in the blow-up/blow-down is mapped to a point only if it comes from some disconnected invariant, which is simply the product of connected Gromov-Witten invariants of connected components. But in that case there are cohomology classes in the insertions of the disconnected Gromov-Witten invariant coming from cohomologies of the blow-up center. The curve in this class has to meet a general point and representatives of these classes, which can be chosen to be away from a general fiber. So the image of the curve in $\Sigma$ cannot be a point.
\end{proof}

\begin{rem}\label{rem:immersedcase}
The condition on $\Sigma$ of having a birational model with an embedded minimal free rational curve can be removed when the general fiber is $\PP^1$. In this case, a general minimal free curve is immersed. And the total space of the fibration over a general such curve is a non-normal surface which is singular only along smooth fibers, and whose normaliztion is smooth. It is easy to modify the proof to adapt to this case.
\end{rem}
%%%%%%%%%%%%%%%%%%%%%%%%%%%%%%%%%%%%%%%%%%%%%%%%%%%%%%%%%%%%%%%%
\bibliographystyle{alpha}
\bibliography{MyBib}

\end{document}